\newcommand{\R}{{\mathbb R}}
\newcommand{\C}{{\mathbb C}}
\newcommand{\no}{\nonumber}
\newcommand{\be}{\begin{eqnarray}}
\newcommand{\ben}{\begin{eqnarray*}}
\newcommand{\en}{\end{eqnarray}}
\newcommand{\enn}{\end{eqnarray*}}
\newcommand{\curl}{{\rm curl\,}}
\newcommand{\Grad}{{\rm Grad\,}}
\newcommand{\grad}{{\rm grad\,}}
\newcommand{\divv}{{\rm div\,}}
\newcommand{\ima}{{\rm Im\,}}
\newcommand{\Om}{\Omega}
\newcommand{\la}{\lambda}
\newtheorem{thm}{Theorem}[section]
\newtheorem{lem}{Lemma}[section]
\theoremstyle{definition}
\newtheorem{defn}{Definition}[section]
\theoremstyle{remark}
\newtheorem{rem}{Remark}[section]
\numberwithin{equation}{section}
\title{\bf Nearly Cloaking the Elastic Wave Fields}
\author{Guanghui Hu}
\address{Weierstrass Institute, Mohrenstr. 39,
10117 Berlin, Germany.}
\email{hu@wias-berlin.de}
\author{Hongyu Liu}
\address{Department of Mathematics,
Hong Kong Baptist University,
Kowloon, Hong Kong}
\email{hongyu.liuip@gmail.com}
\begin{document}
\maketitle

\begin{abstract}
In this work, we develop a general mathematical framework on regularized approximate cloaking of elastic waves governed by the Lam\'e system via the approach of transformation elastodynamics. Our study is rather comprehensive. We first provide a rigorous justification of the transformation elastodynamics. Based on the blow-up-a-point construction, elastic material tensors for a perfect cloak are derived and shown to possess singularities. In order to avoid the singular structure, we propose to regularize the blow-up-a-point construction to be the blow-up-a-small-region construction. However, it is shown that without incorporating a suitable lossy layer, the regularized construction would fail due to resonant inclusions. In order to defeat the failure of the lossless construction, a properly designed lossy layer is introduced into the regularized cloaking construction . We derive sharp asymptotic estimates in assessing the cloaking performance. The proposed cloaking scheme is capable of nearly cloaking an arbitrary content with a high accuracy.

\medskip

\noindent{\bf Keywords}. Elastic cloaking, transformation elastodynamics, Lam\'e system, regularization, asymptotic estimates.

\noindent{\bf Mathematics Subject Classification (2010)}:  74B05, 35R30, 35J25, 74J20.

\end{abstract}

\section{Introduction}

This paper concerns the cloaking of elastic waves. An elastic region is said to be cloaked if its content together with the cloak is ``unseen" by the exterior elastic wave detections. In recent years, the study on elastic cloaking has gained growing interests in the physics literature (cf. \cite{BGM,DG, DG03, FGEM,MBW, Norris11, Parnell}), much followed the development of transformation-optics cloaking of optical waves including the acoustic and electromagnetic waves. A proposal for cloaking for electrostatics using the invariance properties
of the conductivity equation was pioneered in \cite{GLU,GLU2}.
Blueprints for making objects invisible to electromagnetic (EM) waves were proposed in two articles in {\it Science} in 2006 \cite{Leo,PenSchSmi}. The article by Pendry et al
uses the same transformation used in \cite{GLU,GLU2} while the work of Leonhardt uses a conformal mapping in two dimensions. The method based on the invariance properties of the equations modeling the optical wave phenomenon has been named
{\sl transformation optics} and has received a lot of attention in the scientific community due to its significant practical importance. We refer to the survey articles \cite{CCr,GKLU4,GKLU5,LZ} and the references therein for the theoretical and experimental progress on optical cloaking.

The Lam\'e system governing the elastic wave propagation also possesses a certain transformation property, in a more complicated manner than that for the optical wave equations.
Using the transformation property, the transformation-elastodynamics approach can be developed for the construction of elastic cloaks, following a similar spirit to the transformation-optics construction of optical cloaks. In a rather heuristic way, an ideal invisibility cloak can be obtained by the blow-up-a-point construction proceeded as follows. One first selects a region $\Omega$ in the homogeneous space for constructing the cloaking device. Let $P\in\Omega$ be a single point and let $F$ be a diffeomorphism which blows up $P$ to a region $D$ within $\Omega$. Using transformation-elastodynamics, the ambient homogeneous medium around $P$ is then `compressed' via the push-forward to form the cloaking medium in $\Omega\backslash\overline{D}$, whereas the `hole' $D$ forms the cloaked region within which one can place the target object. The cloaking region $\Omega\backslash\overline{D}$ and the cloaked region $D$ yield the cloaking device in the physical space, whereas the homogeneous background space containing the singular point $P$ is referred to as the virtual space. Due to the transformation invariance of the elastic system, the exterior measurements corresponding to the cloaking device in the physical space are the same to those in the virtual space corresponding to a singular point. Intuitively speaking, the scattering information of the elastic cloak is `hidden' in a singular point $P$.

However, the blow-up-a-point construction would yield server singularities for the cloaking elastic material tensors. Most of the physics literature accepts the singular structure and focuses more on the application side (cf. \cite{BGM,DG,DG03,Norris11}). To our best knowledge, there is very little mathematical study on rigorously dealing with the singular elastic cloaking problem. On the other hand, there are a few mathematical works seriously dealing with the singular cloaking problems associated with the optical cloaks. Concurrently, there are two theoretical approaches in the literature: one approach is to accept the singularity and propose to investigate the physically meaningful solutions, i.e. finite energy solutions, to the singular acoustic and electromagnetic wave equations (see \cite{GKLU,LZ}); the other approach is to regularize the singular ideal cloaking construction and investigate the near-invisibility instead; see \cite{Ammari1,KSVW} on the treatment of electrostatics, \cite{Ammari3,Ammari2,KOVW,Liu,Liu1,LS} on acoustics, and \cite{Ammari4,BL,BLZ} on electromagnetism. In this work, we follow the latter approach to develop a general framework of constructing near-cloaks for elastic waves via the transformation-elastodynamics approach.

The present study on regularized approximate cloaking of elastic waves is rather comprehensive and includes several salient ingredients. First, we provide a rigorous justification of the transformation elastodynamics, which lacks in the physics literature. Particularly, we prove the well-posedness of the transformed Lam\'e system. This is presented in Section 2. In Section 3, we consider the elastic cloaking problem, and based the blow-up-a-point transformation, we give the construction of an ideal elastic cloak and analyze the singularity of the cloaking elastic material parameters. In Section 4, we introduce the regularized construction based on a blow-up-a-small-region transformation. Then, we show the existence of resonant inclusions which defy any attempt in achieving near-cloaks. Section 5 is devoted to the development of our near-cloaking scheme by incorporating a properly designed lossy layer into the regularized construction. We derive sharp estimate in assessing the cloaking performance. The asymptotic estimate is independent of the elastic content in the cloaked region, which means the proposed cloaking scheme is capable of nearly cloaking an arbitrary elastic object. The estimate is based on the use of a variety of variational arguments and layer-potential techniques. We also verify that the proposed lossy layer is a finite realization of the a \emph{traction-free} lining.

{Finally, we would like to mention in passing that our study may find important applications in seismic metamaterials (cf. \cite{A1,A2,A3,A4}) to construct feasible devices for protecting key structures from the catastrophic destruction of natural earthquake waves or terrorist attacks (e.g., nuclear blast). For instance,  the elastic invisibility cloak could be of great significance in safeguarding nuclear power plants, electric pylons, oil refineries, nuclear reactors and old or fragile monuments as well as the important components within them.}

\section{Lam\'e system and transformation elastodynamics}

Consider the time-harmonic elastic wave propagating through an anisotropic medium
occupying a bounded smooth domain $\Omega\subset \R^N$ ($N=2,3$). In linear elasticity, the spatially-dependent displacement vector $u(x)=(u_1,\cdots, u_N)(x)$ is governed by
 the following boundary value problem of the reduced Lam\'e system
\be\label{I}\left\{\begin{array}{lll}
\displaystyle\sum_{j,k,l=1}^N \frac{\partial}{\partial x_j} \left( C_{ijkl}(x)  \frac{\partial u_k}{\partial x_l}  \right)    +\omega^2\rho(x)\, u_i=0,&& \mbox{in}\quad \Omega,\quad i=1,2, \cdots, N, \\
\mathcal{N}_{\mathcal{C}}u=\psi\in H^{-1/2}(\partial \Omega)^N,&& \mbox{on}\quad \partial \Omega,\\
\end{array}\right.
\en
where $\omega$ denotes the frequency and the Neumann data $\mathcal{N}_{\mathcal{C}}u$ is defined as
\ben
\mathcal{N}_{\mathcal{C}}u:=\left( \sum_{j,k,l=1}^N \nu_j C_{1jkl}  \frac{\partial u_k}{\partial x_l},\;  \sum_{j,k,l=1}^N \nu_j C_{2jkl}  \frac{\partial u_k}{\partial x_l},\, \cdots,\,
\sum_{j,k,l=1}^N \nu_j C_{Njkl}  \frac{\partial u_k}{\partial x_l}
  \right),
\enn
with $\nu=(\nu_1,\nu_2,\ldots,\nu_N)\in\mathbb{S}^{N-1}$ denoting the exterior unit normal vector to $\partial\Omega$. In (\ref{I}), $\mathcal{C}=(C_{ijkl})_{i,j,k,l=1}^N$  is a fourth-rank constitutive material tensor of the elastic medium which shall be referred to as the stiffness tensor. $\rho$ is a complex-valued function with $\Re \rho>0$ and $\Im \rho\geq 0$, respectively, denoting the density and damping parameter of the elastic medium.
 In this paper, we employ the notation $\{\Omega; \mathcal{C}, \rho\}$ to denote the elastic medium supported in $\Omega$ characterized by the
stiffness tensor $\mathcal{C}$ with entries $C_{ijkl}(x)\in L^\infty(\Omega)$ and $\rho\in L^\infty(\Omega)$. The stiffness tensor satisfies the following symmetries for a generic anisotropic elastic material:
\begin{equation}\label{eq:symmetry}
\mbox{major symmetry:}\quad C_{ijkl}=C_{klij},\qquad {\color{black}\mbox{minor symmetries:}\quad C_{ijkl}=C_{jikl}=C_{ijlk},}
\end{equation}
for all $i,j,k,l=1,2, \cdots,N$. By Hooke's law,  the stress tensor $\sigma$ relates with the stiffness tensor $\mathcal{C}$ via the identity $\sigma(u):=\mathcal{C}:\nabla u$,
where the action of $\mathcal{C}$ on a matrix $A=(a_{ij})$ is defined as
\ben
\mathcal{C}:A=(\mathcal{C}:A)_{ij}=\displaystyle\sum_{k,l=1}^N C_{ijkl}\; a_{kl}.
\enn
Hence, the elliptic system in (\ref{I}) can be restated as
\ben
\nabla\cdot (\mathcal{C}: \nabla u)+\omega^2\rho u=0\quad\mbox{in}\quad \Omega.
\enn Moreover, the boundary operator in (\ref{I}) can be rewritten as $\mathcal{N}_{\mathcal{C}}u=\nu\cdot \sigma(u)=\nu\cdot (\mathcal{C}:\nabla u)$, which is exactly the \emph{stress vector} or \emph{traction} on $\partial \Omega$.

The equivalent variational formulation of (\ref{I}) reads as follows: find $u=(u_1,\cdots,u_N)\in H^1(\Omega)^N$ such that
\be\label{variational}
a_{\mathcal{C}}(u,v):=\int_{\Omega}\left\{  \displaystyle\sum_{i,j,k,l=1}^N  C_{ijkl}  \frac{\partial u_k}{\partial x_l} \frac{\partial \overline{v_i}}{\partial x_j}
 -\omega^2 \rho(x) u_i\overline{v}_i\right\} dx= \int_{\partial \Omega}\psi\cdot\overline{v} dx,
\en
for any $v=(v_1,v_2,\cdots,v_N)\in H^1(\Omega)^N$. Suppose further that the elastic tensor $\mathcal{C}$ satisfies the
uniform Legendre ellipticity condition {
\be\label{LH}
\sum_{i,j,k,l=1}^N C_{ijkl}(x)\, a_{ij} a_{kl} \geq c_0 \sum_{i,j=1}^N |a_{i,j}|^2,\quad a_{ij}=a_{ji},
\en for all $x\in \Omega$,
i.e., $(\mathcal{C}(x):A):A\geq c_0 ||A||^2$ for all symmetry matrices $A=(a_{ij})^N_{i,j=1}\in \R^{N\times  N}$.}
 Then the sesquilinear form on the left hand side of (\ref{variational}) satisfies G{\aa}rding's inequality
\ben
 a_{\mathcal{C}}(u,u)\geq c_0 ||\nabla u||_{L^2(\Omega)^N}-\omega^2\,||\rho||_{L^\infty(\Omega)} || u||_{L^2(\Omega)^N} \quad\mbox{for all}\quad u\in H^1(\Omega)^N.
\enn
 As a consequence, there exists a unique weak solution to (\ref{variational}) for all frequencies $\omega\in \R_+$ excluding possibly a discrete set $\mathcal{D}$ with the only accumulating point at infinity. The well-posedness of the boundary value problem (\ref{I})
allows one to define the boundary Neumann-to-Dirichlet (NtD) map as follows
$$\Lambda_{\mathcal{C},q}: H^{-1/2}(\partial \Omega)^N\rightarrow H^{1/2}(\partial \Omega)^N, \quad \Lambda_{\mathcal{C},q} f=u|_{\partial \Omega},$$
where $u\in H^{1/2}(\partial \Omega)^N$ is the unique solution to (\ref{I}). Throughout the rest of the paper, we refer to an elastic medium $\{\Omega;\mathcal{C},\rho\}$ as regular if it satisfies the major symmetry in \eqref{eq:symmetry} and the uniform Legendre ellipticity condition in \eqref{LH}, otherwise it is called singular. We note that for a regular elastic medium, the corresponding Lam\'e system is well-posed provided $\omega\notin\mathcal{D}$.

If an elastic material is isotropic and homogeneous, one has
\be\label{C}
\mathcal{C}(x)\equiv\mathcal{C}^{(0)},\quad C^{(0)}_{ijkl}=\lambda \delta_{i,j}\delta_{k,l}+\mu ( \delta_{i,k}\delta_{j,l}+ \delta_{i,l}\delta_{j,k}).
\en
That is, the stiffness tensor is constant throughout the material with the Lam\'e constants $\lambda$ and $\mu$ satisfying $\mu>0, N \lambda+2 \mu>0$. For simplicity, the mass density is usually normalized to be one in an isotropic homogeneous medium,  i.e., $\rho(x)\equiv 1$.
Under these assumptions,
 the stress tensor takes the form
\ben
\sigma(u)=\lambda\, \textbf{I}\, \divv u + 2\mu \epsilon(u),\quad \epsilon(u):=\frac{1}{2}\left(\nabla u+\nabla u^\top\right),
\enn where $\textbf{I}$ stands for the $N\times N$ identity matrix. In this case,
the Lam\'e system (\ref{I}) reduces to the boundary value problem for Navier's equation,
\be\label{eq:Naiver}
 \mathcal{L}u+ \omega^2   u = 0 \quad \mbox{in}\quad \Omega,\quad
Tu =\psi \quad \mbox{on}\quad \partial \Omega,
\en
where $Tu=T_{\lambda,\mu}u:= \nu\cdot (\mathcal{C}^{(0)}:\nabla u)$ stands for the traction on the boundary of the isotropic medium $\{\Omega; \mathcal{C}^{(0)}, 1\}$, and
\ben
\mathcal{L}u:=\nabla\cdot (\mathcal{C}^{(0)}: \nabla u)=\mu\Delta u+ (\lambda + \mu) \, \grad \divv\, u.
\enn
In two dimensions, $Tu$ can be explicitly written as
\be\label{stress-2D}
Tu:=
2 \mu \, \partial_{\nu} u + \lambda \,
\nu \, \divv u
+\mu \tau(\partial_2 u_1 - \partial_1 u_2),\quad  \tau:=(-\nu_2,\nu_1),\,\nu=(\nu_1,\nu_2),
\en
whereas in three dimensions,
\be\label{stress-3D}
Tu:=2 \mu \, \partial_{\nu} u + \lambda \,
\nu \, \divv u+\mu \nu\times \curl u,\quad \nu=(\nu_1,\nu_2,\nu_3).
\en Here and in what follows, we write $T_{\lambda,\mu}u=Tu$ to drop the dependance of $T_{\lambda,\mu}$ on the Lam\'e constants $\lambda$ and $\mu$.  Moreover, we shall refer to $\{\Omega; \mathcal{C}^{(0)}, 1\}$ as the free space or reference space in our subsequent study on the invisibility cloaking.

Let $\tilde{x}=F(x): \Omega\rightarrow \tilde{\Omega}$ be a bi-Lipschitz and orientation-preserving mapping. Our cloaking study shall be based on the following transformation invariance of the Lam\'e system (\ref{I}).

\begin{lem}\label{Lem:change}
(i) The function $u\in H^1(\Omega)^N$ is a solution to $\nabla\cdot (\mathcal{C}: \nabla u)+\omega^2\rho u=0$ in $\Omega$ if and only if $\tilde{u}=(F^{-1})^*u:=u\circ F^{-1}\in H^1(\tilde\Omega)^N$ is a solution to
\be\label{transform-Lame}
\tilde{\nabla}\cdot (\tilde{\mathcal{C}}: \tilde{\nabla} \tilde{u})+\omega^2\tilde{\rho} \tilde{u}=0\quad\mbox{in}\quad\tilde{\Omega},
\en where $\tilde{\nabla}={\nabla}_{\tilde{x}}$ and the transformed tensor and density are given by
\begin{equation}\label{eq:pushforward}
\begin{split}
\tilde{\mathcal{C}}=\tilde{C}_{iqkp}(\tilde{x})=&\frac{1}{\det(M)}\left\{\displaystyle\sum_{l,j=1}^N  C_{ijkl}  \frac{\partial \tilde{x}_p}{\partial x_l}
\frac{\partial \tilde{x}_q}{\partial x_j}\right\}\bigg |_{x=F^{-1}(\tilde{x})}=: F_{*}\mathcal{C},\\
 \tilde{\rho}=&\left(\frac{\rho}{\det(M)}\right)\bigg |_{x=F^{-1}(\tilde{x})}=: F_{*}\rho,\quad M=\left(\frac{\partial \tilde{x}_i}{\partial x_j}\right)_{i,j=1}^N.
\end{split}
\end{equation}
(ii) If the boundary $\partial \Omega$ remains fixed under the transformation, i.e., $F=\mbox{Identity}$ on $\partial \Omega$, then $\Lambda_{\mathcal{C},\rho}=\Lambda_{\tilde{\mathcal{C}},\tilde{\rho}}$.
\end{lem}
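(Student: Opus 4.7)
The plan is to reduce everything to the variational formulation (\ref{variational}), which is available since $\{\Omega;\mathcal{C},\rho\}$ is assumed regular. Because $F$ is bi-Lipschitz and orientation-preserving, the pullback $v\mapsto\tilde v:=v\circ F^{-1}$ is a bijection from $H^1(\Omega)^N$ onto $H^1(\tilde\Omega)^N$ that also maps $\partial\Omega$-traces to $\partial\tilde\Omega$-traces; this is what will turn equality of the bilinear forms into equivalence of the two Lam\'e systems and eventually into the identity of the two NtD maps.

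For part (i) I would fix arbitrary $u,v\in H^1(\Omega)^N$, set $\tilde u=u\circ F^{-1}$, $\tilde v=v\circ F^{-1}$, and transport $a_{\mathcal C}(u,v)$ to $\tilde\Omega$. By Rademacher's theorem the Jacobian $M=(\partial\tilde x_i/\partial x_j)$ exists a.e., and the chain rule gives $\partial u_k/\partial x_l=\sum_p(\partial\tilde u_k/\partial\tilde x_p)(\partial\tilde x_p/\partial x_l)$ together with the analogous identity for $v$. Substituting into the first integrand of (\ref{variational}), regrouping the summation as $(i,q,k,p)$, and using $dx=(\det M)^{-1}d\tilde x$ (legitimate because $F$ is orientation-preserving), I obtain
\[
\sum_{i,j,k,l}C_{ijkl}\,\frac{\partial u_k}{\partial x_l}\,\overline{\frac{\partial v_i}{\partial x_j}}\,dx \;=\; \sum_{i,q,k,p}\tilde C_{iqkp}\,\frac{\partial \tilde u_k}{\partial \tilde x_p}\,\overline{\frac{\partial \tilde v_i}{\partial \tilde x_q}}\,d\tilde x,
\]
with $\tilde C_{iqkp}$ exactly as in (\ref{eq:pushforward}); the mass term $\omega^2\rho u_i\overline{v_i}\,dx$ transforms in the same way into $\omega^2\tilde\rho\,\tilde u_i\overline{\tilde v_i}\,d\tilde x$. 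Hence $a_{\mathcal C}(u,v)=a_{\tilde{\mathcal C}}(\tilde u,\tilde v)$, and since the pullback is a bijection of the trial/test spaces, $u$ solves $\nabla\cdot(\mathcal{C}:\nabla u)+\omega^2\rho u=0$ in the weak sense on $\Omega$ if and only if $\tilde u$ solves (\ref{transform-Lame}) in the weak sense on $\tilde\Omega$.

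For part (ii), the hypothesis $F=\mathrm{Identity}$ on $\partial\Omega$ gives $\partial\tilde\Omega=\partial\Omega$ and $\tilde v|_{\partial\tilde\Omega}=v|_{\partial\Omega}$ for every $v\in H^1(\Omega)^N$, so the duality pairing $\langle\psi,v\rangle_{\partial\Omega}=\langle\psi,\tilde v\rangle_{\partial\tilde\Omega}$ for any Neumann datum $\psi\in H^{-1/2}(\partial\Omega)^N$. Combined with the identity of bilinear forms from part (i), this forces $\tilde u$ to be the unique weak solution of the transformed Neumann problem with datum $\psi$ (uniqueness is inherited from the original problem via the pullback, provided $\omega\notin\mathcal{D}$), and therefore $\Lambda_{\tilde{\mathcal C},\tilde\rho}\psi=\tilde u|_{\partial\tilde\Omega}=u|_{\partial\Omega}=\Lambda_{\mathcal C,\rho}\psi$ for every $\psi$, giving $\Lambda_{\mathcal C,\rho}=\Lambda_{\tilde{\mathcal C},\tilde\rho}$.

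The main obstacle I anticipate is the low regularity of $F$: only \emph{bi-Lipschitz} rather than smooth, so all chain-rule and change-of-variables manipulations have to be justified a.e.\ rather than pointwise, and the Sobolev-space bijection $v\mapsto v\circ F^{-1}$ between $H^1(\Omega)^N$ and $H^1(\tilde\Omega)^N$ must be invoked to keep the variational framework intact. I would also need to verify (easy bookkeeping from (\ref{eq:pushforward})) that $\tilde{\mathcal C}\in L^\infty(\tilde\Omega)$, that $\tilde\rho\in L^\infty(\tilde\Omega)$, and that $\tilde{\mathcal C}$ inherits the major and minor symmetries of $\mathcal C$, so that the transformed problem (\ref{transform-Lame}) indeed lies in the same well-posed class to which the NtD map in part (ii) can be unambiguously attached.
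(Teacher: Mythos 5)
Your core argument---change variables in the sesquilinear form, obtain $a_{\mathcal C}(u,v)=a_{\tilde{\mathcal C}}(\tilde u,\tilde v)$, and use the bijection $v\mapsto v\circ F^{-1}$ between $H^1(\Omega)^N$ and $H^1(\tilde\Omega)^N$---is exactly the route the paper takes, and your treatment of part (ii) via matching traces and inherited uniqueness is sound.

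However, your closing sanity-check contains a genuine error: you say you would ``verify (easy bookkeeping from (\ref{eq:pushforward})) \dots\ that $\tilde{\mathcal C}$ inherits the major and minor symmetries of $\mathcal C$.'' The minor symmetry is in fact \emph{not} inherited; the paper's Remark~\ref{remark-2.1} proves that $\tilde{\mathcal C}$ retains only the major symmetry $\tilde C_{iqkp}=\tilde C_{kpiq}$ and explicitly cites Milton et al.\ for the observation that loss of the minor symmetries is unavoidable under general transformations (this is what motivates Cosserat materials in the cloaking literature). Had you actually tried to carry out that bookkeeping you would have hit a wall. Fortunately this does not sink your proof: the paper's notion of a ``regular'' medium requires only major symmetry plus the uniform Legendre ellipticity \eqref{LH}, not minor symmetry, so well-posedness does not hinge on it. What the paper \emph{does} check---and what you do not---is that $\tilde{\mathcal C}$ satisfies the Legendre ellipticity condition; your alternative of inheriting well-posedness through the pullback bijection is a valid shortcut for the purposes of part~(ii), but you should drop the minor-symmetry claim and either verify ellipticity directly (as the paper does via its computation with $\tilde a_{ij}=\sum_q(\partial\tilde x_q/\partial x_j)a_{iq}$) or explicitly state that you are deducing well-posedness from the bijection rather than from structural properties of $\tilde{\mathcal C}$.
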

\begin{proof}
By changing the variables $\tilde{x}=F(x)$ in the sesquilinear form of (\ref{variational}) and using Green's formula, one has
\be\no
\int_{\partial \Omega}\psi\cdot\overline{v} ds &=&
\int_{\tilde{\Omega}}\left\{  \displaystyle\sum_{i,q,k,p=1}^N  \tilde{\mathcal{C}}_{iqkp}  \frac{\partial \tilde{u}_k}{\partial \tilde{x}_p} \overline{\frac{\partial \tilde{v}_i}{\partial \tilde{x}_q}}
 -\omega^2 \tilde{\rho}(\tilde{x}) \tilde{u}_i\overline{\tilde{v}}_i\right\}\, d\tilde{x} \\ \label{eq:1}
 &=&
-\int_{\tilde{\Omega}}\left\{  \divv (\tilde{\mathcal{C}}: \nabla \tilde{u}) + \omega^2\tilde{\rho}\tilde{u}\right\} \cdot \tilde{v}\,d\tilde{x}
+\int_{\partial \tilde{\Omega}}\mathcal{N}_{\mathcal{C}}\tilde{u}\cdot\overline{\tilde{v}}\, ds.
\en
By choosing $v\in C_0^\infty (\Omega)^N$, we see $\tilde{v}\in H_0^1 (\tilde{\Omega})^N$ and hence \eqref{eq:1} readily implies that  $\divv (\tilde{\mathcal{C}}: \nabla \tilde{u}) + \omega^2\tilde{\rho}\tilde{u}=0$ in $\tilde{\Omega}$.

Before proceeding to prove the second assertion, we verify the uniform Legendre elliptic condition for the
transformed tensor $\tilde{\mathcal{C}}_{iqkp}$. Indeed, for
any symmetric matrix $A=\{a_{i,j}\}$ it holds that
\be\no
 \displaystyle\sum_{i,q,k,p=1}^N  \tilde{\mathcal{C}}_{iqkp}\, a_{iq}\,a_{kp} &=&\frac{1}{\mbox{det}(M)}\displaystyle\sum_{i,q,k,p,l,j=1}^N C_{ijkl}\,\frac{\partial \tilde{x}_p}{\partial x_l}
\frac{\partial \tilde{x}_q}{\partial x_j}\, \, a_{iq}\,a_{kp}\\ \label{eq:2}
&=&\frac{1}{\mbox{det}(M)}\displaystyle\sum_{i,j,l,k=1}^N C_{ijkl}\,\, \tilde{a}_{ij}\,\tilde{a}_{kl}
\en
with
\ben
\tilde{a}_{ij}=\displaystyle\sum_{q=1}^N\frac{\partial \tilde{x}_q}{\partial x_j}\,a_{iq},\quad i, j=1,2,\cdots, N.
\enn
In view of the Legendre elliptic condition for $\mathcal{C}$ and the bi-Lipschitz assumption on $F$, we deduce from (\ref{eq:2}) that
\ben
 \displaystyle\sum_{i,q,k,p=1}^N  \tilde{C}_{iqkp}\, \tilde{a}_{ij}\,\tilde{a}_{kl}\geq c_0\,\sum_{i,j=1}^N |\tilde{a}_{ij}|^2\,\geq \tilde{c}_0\,\sum_{i,j=1}^N |a_{ij}|^2
\enn
for some constant $\tilde{c}_0>0$. That is, the transformed tensor $\tilde{\mathcal{C}}$ satisfies the Legendre elliptic condition. Therefore, the transformed Lam\'e system is well-posed and particularly we have a well-defined NtD map  $\Lambda_{\tilde{\mathcal{C}}, \tilde{\rho}}: H^{-1/2}(\tilde{\Omega})^N\rightarrow H^{1/2}(\tilde{\Omega})^N $ associated with the transformed system.

Finally, suppose that $F=\mbox{Identity}$ on $\partial \Omega$ and $\mathcal{N}_{\tilde{\mathcal{C}}} \tilde{u}=\mathcal{N}_{\mathcal{C}} u=\psi$ on $\partial \Omega$ for some $\psi\in H^{-1/2}(\tilde{\Omega})^N$. Then, one has
\ben
\Lambda_{\tilde{\mathcal{C}}, \tilde{\rho}}\psi=\tilde{u}|_{\partial \Omega}=(u\circ F^{-1})|_{\partial \Omega}=u|_{\partial \Omega}=\Lambda_{\mathcal{C}, \rho}\psi,
\enn
which readily implies that $\Lambda_{\tilde{\mathcal{C}}, \tilde{\rho}}=\Lambda_{\mathcal{C}, \rho}$.

The proof is complete.
\end{proof}

\begin{rem}\label{remark-2.1}
The transformed elastic tensor $\tilde{\mathcal{C}}$ possesses only the major symmetry, i.e.,
\ben
\tilde{C}_{iqkp}&=&\frac{1}{\det(M)}\displaystyle\sum_{l,j=1}^N  C_{ijkl}  \frac{\partial \tilde{x}_p}{\partial x_l}
\frac{\partial \tilde{x}_q}{\partial x_j}
=\frac{1}{\det(M)}\displaystyle\sum_{l,j=1}^N  C_{klij}  \frac{\partial \tilde{x}_p}{\partial x_l}
\frac{\partial \tilde{x}_q}{\partial x_j}\\
&=& \frac{1}{\det(M)}\displaystyle\sum_{j,l=1}^N  C_{kjil}  \frac{\partial \tilde{x}_p}{\partial x_j}
\frac{\partial \tilde{x}_q}{\partial x_l} =\tilde{C}_{kpiq},
\enn
where the second equality follows from the major symmetry of $\mathcal{C}$. However, $\tilde{\mathcal{C}}$ does not possess the minor symmetry. In fact, it has been pointed out by Milton et. al. \cite{MBW} that the invariance of the Lam\'e system can be achieved only if one relaxes the assumption on the minor symmetries of the transformed elastic tensor. This has led Norris \& Shuvalov \cite{Norris11} and Parnell \cite{Parnell} to explore the elastic cloaking by using Cosserat material or by employing non-linear pre-stress in a neo-Hooken elastomeric material. Design of transformation-elastodynamics-based Cosserat elastic cloaks (without the minor symmetry) has been numerically tested in the
cylindrical case \cite{BGM} as well as in the spherical case \cite{DG}.
Note that the transformed equation (\ref{transform-Lame})
retains its original form of the Lam\'e system and avoids any coupling between stress and velocity. Furthermore,
 the transformed mass density is still isotropic. We refer to \cite{MBW, Norris11, DG, BGM} for discussions and investigations of the form of  the elastodynamic equations under general transformations.
\end{rem}

\begin{rem}
In \eqref{eq:pushforward}, $F_*\mathcal{C}$ and $F_*\rho$ are called the push-forwards of $\mathcal{C}$ and $\rho$, respectively. For notational convenience, we shall write $\{\tilde\Omega; \tilde{\mathcal{C}},\tilde\rho\}=F_*\{\Omega;\mathcal{C},\rho\}$ to denote the push-forward defined in \eqref{eq:pushforward}.
\end{rem}

\section{Elastic cloaking and blowup construction}\label{sect:3}

We are in a position to introduce the elastic cloaking for our study. Henceforth, we let $\Omega\subset\mathbb{R}^N$ and $D\Subset\Omega$ be bounded and connected smooth domains. It is further assumed that $\Omega\backslash\overline{D}$ is connected and $D$ contains the origin. Let $h\in\mathbb{R}_+$ be sufficiently small and $D_h:=\{hx; x\in D\}$. Let $D_{1/2}$ represent the region which we intend to cloak and let
\ben
\{D_{1/2}; \mathcal{C}^{(a)}, \rho^{(a)}\}
\enn be the target medium. From a practical viewpoint, throughout the present study, we assume that $\{D_{1/2}; \mathcal{C}^{(a)}, \rho^{(a)}\}$ is arbitrary but regular. Let
\begin{equation}\label{device-0}
\{\Omega\backslash\overline{D}_{1/2}; \mathcal{C}^{(c)}, \rho^{(c)}\}
\end{equation}
be a properly designed layer of elastic medium, which is referred to as the cloaking medium. Let
\be\label{Omega}
\{\Omega; \mathcal{C}, \rho\}=\left\{\begin{array}{lll}
 \{\Omega\backslash \overline{D}_{1/2}; \mathcal{C}^{(c)}, \rho^{(c)}\}&&\mbox{in}\quad \Omega\backslash\overline{D}_{1/2},\\
 \{D_{1/2}; \mathcal{C}^{(a)}, \rho^{(a)}\}  &&\mbox{in}\quad D_{1/2},
\end{array}\right.
\en
be the extended medium occupying $\Omega$ and let $\Lambda_{\mathcal{C},\rho}$ be the associated NtD map.
Next, we introduce the ``free" NtD map as follows. Let $v$ be the solution to the Navier equation in the free space $\{\Omega;\mathcal{C}^{(0)}, 1\}$ (cf. (\ref{eq:Naiver}))
\be\label{Lame-0}
\mathcal{L}\,v+\omega^2   v=0\quad\mbox{in}\quad \Omega,\quad Tv=\psi\in H^{-1/2}(\partial\Omega)^N\quad\mbox{on}\quad\partial \Omega.
\en
It is assumed that $-\omega^2   $ is not an eigenvalue of the elliptic operator $\mathcal{L}$ with the traction-free boundary condition on $\partial\Omega$, and hence one has a well-defined "free" NtD map
\ben
\Lambda_0 \psi=v|_{\partial \Omega}
\enn
where $v\in H^1(\Omega)^N$ solves (\ref{Lame-0}).
The solution $v$ to (\ref{Lame-0}) can be decomposed into its compressional and shear parts as
$v=v_p+ v_s$, where in three dimensions
\be\label{kp}
v_p &:=& -\frac{1}{k_p^2} \,\grad \divv v \, , \quad k_p = \omega /\sqrt{(2\mu+\lambda)}\, ,\\ \label{ks}
v_s &:=& \frac{1}{k_s^2} \, \curl\curl v \, , \quad  k_s = \omega /\sqrt{\mu}.
\en and $k_p$, $k_s$ are known as the compressional and shear wave numbers, respectively.
It is straightforward to verify that the functions $v_p$ and $v_s$ satisfy the vectorial Helmholtz equations
\be\label{helmholtz-p}
(\Delta+k_p^2) \, v_p = 0,  \quad \curl v_p=0 &&\mbox{in}\quad \Omega,\\ \label{helmholtz-s}
(\Delta+k_s^2) \, v_s=0, \quad   \divv v_s=0 && \mbox{in} \quad \Om \, .
\en
This implies that the compressional and shear waves propagate at different speeds. By the elliptic equations (\ref{helmholtz-p}) and (\ref{helmholtz-s}), one can defined another two boundary NtD maps
\ben
\Lambda_0^{(p)}:\quad  H^{-1/2}(\partial \Omega)^N\rightarrow H^{1/2}(\partial \Omega)^N,\quad
\Lambda_0^{(p)} \psi&=&v_p|_{\partial \Omega},\\
\Lambda_0^{(s)}:\quad  H^{-1/2}(\partial \Omega)^N\rightarrow H^{1/2}(\partial \Omega)^N,\quad
\Lambda_0^{(s)} \psi&=&v_s|_{\partial \Omega},
\enn
where $v_p, v_s\in H^1(\Omega)^N$ are solutions to (\ref{helmholtz-p}) and (\ref{helmholtz-s}), respectively, prescribed with the boundary values
\be\label{T}
T_p v_p= \psi, \quad T_s v_s=\psi\quad\mbox{on}\quad\partial \Omega,
\en
with the operators $T_p$ and $T_s$ given by (in three dimensions)
\ben
T_p v_p:=2 \mu \, \partial_{\nu} v_p + \lambda\,\nu\divv v_p, \,\qquad T_s v_s:=2 \mu \, \partial_{\nu} v_s +\mu \nu\times \curl v_s.
\enn
One observes that (\ref{helmholtz-s}) is equivalent to the Maxwell system $\curl \curl v^s-k_s^2 v^s=0$ in $\Omega$, hence the boundary data $\nu\times \curl v^s:=\tilde{\psi} \in H^{-1/2}(\mbox{Div},\partial\Omega)$  is sufficient to uniquely determine $v_s\in H(\curl, \Omega)$; we refer to \cite{BCS} for the definition of Sobolev spaces mentioned here.
Since boundary value problems (\ref{helmholtz-p}), (\ref{helmholtz-s}) and
(\ref{T}) are not always solvable for general $\psi\in H^{-1/2}(\partial \Omega)^N$, we define the admissible sets of inputs by
\ben
\mathcal{P}&:=&\{ \psi\in H^{-1/2}(\partial \Omega)^N:
\mbox{there exists a $v_p$ to (\ref{helmholtz-p}) such that $T_pv_p=\psi$}
 \},\\
\mathcal{S}&:=&\{ \psi\in H^{-1/2}(\partial \Omega)^N:
\mbox{there exists a $v_s$ to (\ref{helmholtz-s}) such that $T_sv_s=\psi$}
 \}.
\enn
Then it is clear that (\ref{helmholtz-p}), (\ref{helmholtz-s}) and
(\ref{T}) are uniquely solvable for every $\psi\in \mathcal{P}$ (resp. $\psi\in \mathcal{S}$), provided $\omega^2$ is not an eigenvalue of the operator $\mathcal{L}$ with the traction-free boundary condition.
\begin{defn}\label{def:cloak}
The layer of elastic medium $\{\Omega\backslash \overline{D}_{1/2}; \mathcal{C}^{(c)}, \rho^{(c)}\}$ is said to be a full elastic cloak if $\Lambda_{\mathcal{C},\rho}(\psi)=\Lambda_0(\psi)$ for all $\psi\in H^{-1/2}(\partial \Omega)^N$;
 it is called a compressional elastic cloak if $\Lambda_0^{(p)}(\psi)=\Lambda_{\mathcal{C},\rho}(\psi)$ for all $\psi\in\mathcal{P}$; and
it is called a  shear elastic cloak if $\Lambda_0^{(s)}(\psi)=\Lambda_{\mathcal{C},\rho}(\psi)$ for all $\psi\in\mathcal{S}$.
\end{defn}
{ We would like to emphasize that the shear and pressure waves are inherently coupled in the Lam\'e system and that an incident pure shear or pressure wave would incite the two kind of waves simultaneously in general.}
 An inverse problem of significant importance arising in practical applications is to infer information of the interior object $\{\Omega; \mathcal{C}, \rho\}$ by knowledge of the exterior elastic wave measurements. The boundary NtD map $\Lambda_{\mathcal{C},\rho}$ encodes the exterior measurements that one can obtain. We refer to \cite{AK,Amm5, CGK01,HLLS,HLL,HS,Sini,HH,HKJ,GS,NKU,NU1,NU2,NU3} for the theoretical unique identifiability results and numerical reconstruction algorithms developed for these inverse problems. According to Definition~\ref{def:cloak}, the cloaking layer $\{\Omega\backslash \overline{D}_{1/2}; \mathcal{C}^{(c)}, \rho^{(c)}\}$ makes itself and the elastic object $\{D_{1/2}; \mathcal{C}^{(a)}, \rho^{(a)}\}$ undetectable by the exterior elastic wave measurements.

In this paper we focus on the design of full elastic cloaks. In what follows, we show that the entire elastic waves diffracted by $\{D_{1/2}; \mathcal{C}^{(a)}, \rho^{(a)}\}$  can be cloaked if and only if both the compressional and shear waves can be cloaked.
\begin{lem}
Let $\{\Omega;\mathcal{C}, \rho\}$ be an elastic cloak as described above, which is assumed to be regular. Then, $\Lambda_{\mathcal{C},\rho}=\Lambda_0$ if and only if $\Lambda_{\mathcal{C},\rho}=\Lambda_0^{(p)}$ and $\Lambda_{\mathcal{C},\rho}=\Lambda_0^{(s)}$.
\end{lem}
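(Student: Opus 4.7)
My plan is to exploit the Helmholtz decomposition $v=v_p+v_s$ from (\ref{kp})--(\ref{ks}) to show that both the traction datum $Tv=\psi$ and the displacement trace $v|_{\partial\Omega}$ split additively into compressional and shear pieces lying respectively in $\mathcal{P}$ and $\mathcal{S}$. Once this additive splitting is in place, the equivalence should follow purely from linearity of the three NtD maps together with the hypothesis.

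The key preparatory step I would carry out first is to verify that $v_p$ from (\ref{kp}), (\ref{helmholtz-p}) is automatically a solution of the full free Navier equation (\ref{Lame-0}). Since $\curl v_p=0$ one has $\grad\divv v_p=\Delta v_p$, so $\mathcal{L}v_p=(2\mu+\lambda)\Delta v_p=-(2\mu+\lambda)k_p^2 v_p=-\omega^2 v_p$; moreover the traction formula (\ref{stress-3D}) collapses to $Tv_p=2\mu\partial_\nu v_p+\lambda\nu\divv v_p=T_p v_p$, because $\curl v_p=0$. A symmetric argument using $\divv v_s=0$ shows that $v_s$ also solves (\ref{Lame-0}), with $Tv_s=T_s v_s$; the two-dimensional formula (\ref{stress-2D}) behaves identically because the term $\mu\tau(\partial_2 u_1-\partial_1 u_2)$ encodes $\curl$ in 2D. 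Consequently, for the unique free-space solution $v$ of (\ref{Lame-0}) with traction $\psi\in H^{-1/2}(\partial\Omega)^N$ (uniqueness being guaranteed under the standing assumption that $\omega^2$ is not a traction-free Neumann eigenvalue of $\mathcal{L}$), one obtains the additive splittings
\[
\psi=T_pv_p+T_sv_s=:\psi_p+\psi_s,\qquad \Lambda_0\psi=\Lambda_0^{(p)}\psi_p+\Lambda_0^{(s)}\psi_s,
\]
where $\psi_p\in\mathcal{P}$ and $\psi_s\in\mathcal{S}$ by construction.

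For the direction ($\Leftarrow$), given an arbitrary $\psi$ I decompose $\psi=\psi_p+\psi_s$ as above; linearity of $\Lambda_{\mathcal{C},\rho}$ and the hypotheses then yield
\[
\Lambda_{\mathcal{C},\rho}\psi=\Lambda_{\mathcal{C},\rho}\psi_p+\Lambda_{\mathcal{C},\rho}\psi_s=\Lambda_0^{(p)}\psi_p+\Lambda_0^{(s)}\psi_s=\Lambda_0\psi.
\]
For the direction ($\Rightarrow$), assuming $\Lambda_{\mathcal{C},\rho}=\Lambda_0$, take $\psi\in\mathcal{P}$ with associated $v_p$ from (\ref{helmholtz-p}), (\ref{T}); the preparatory step tells me $v_p$ is itself the unique solution of the free Navier problem with traction $\psi$, so $\Lambda_0\psi=v_p|_{\partial\Omega}=\Lambda_0^{(p)}\psi$, and the hypothesis gives $\Lambda_{\mathcal{C},\rho}\psi=\Lambda_0^{(p)}\psi$. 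The case $\psi\in\mathcal{S}$ is handled identically with $v_s$ in place of $v_p$.

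The main obstacle I expect is precisely the preparatory compatibility between the two decoupled Helmholtz systems (\ref{helmholtz-p})--(\ref{helmholtz-s}) and the full coupled Navier system (\ref{Lame-0}); once this compatibility and the resulting additive splitting of boundary data are established, the rest is just linearity of the NtD maps combined with uniqueness of the relevant boundary value problems.
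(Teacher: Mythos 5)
Your proposal is correct and follows essentially the same route as the paper: decompose the free-field solution $v=v_p+v_s$, use the observations $\mathcal L v_p+\omega^2 v_p=0$, $Tv_p=T_pv_p$ (and likewise for $v_s$) to reduce both directions to linearity of the NtD maps and well-posedness. The only difference is that you spell out in full the compatibility check ($v_p$ solves the full Navier system since $\curl v_p=0$ gives $\grad\divv v_p=\Delta v_p$, etc.), which the paper invokes tersely for the necessity direction; the sufficiency step in the paper, writing $u=u_p+u_s$ by uniqueness, is exactly your use of linearity of $\Lambda_{\mathcal C,\rho}$.
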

\begin{proof}
The necessity follows directly from the fact that the equations in (\ref{helmholtz-p}) and (\ref{helmholtz-s}) can be reformulated as the Navier equation (\ref{Lame-0}).

Next, we prove the sufficiency. Let $v$ and $u$ solve the boundary value problems (\ref{Lame-0}) and (\ref{I}), respectively. The function $v$ can be decomposed as $v=v_p+v_s$ with $v_p, v_s\in H^1(\Omega)^N$ given by (\ref{kp}) and (\ref{ks}), respectively. Set $\psi_p:=Tv_p=T_pv_p$ and $\psi_s:=Tv_s=T_sv_s$ in $\Omega$. Then $\psi_p+\psi_s=Tv=\psi$ on $\partial\Omega$.
Consider the boundary value problems
\ben
\nabla\cdot (\mathcal{C}: \nabla u_p)+\omega^2\rho u_p=0\quad\mbox{in}\quad \Omega,\quad \mathcal{N}_{\mathcal{C}}u_p=\psi_p\quad\mbox{on}\quad\partial \Omega, \\
\nabla\cdot (\mathcal{C}: \nabla u_s)+\omega^2\rho u_s=0\quad\mbox{in}\quad \Omega,\quad \mathcal{N}_{\mathcal{C}}u_s=\psi_s\quad\mbox{on}\quad\partial \Omega.
\enn
By uniqueness of solutions to (\ref{I}), we have $u=u_p+u_s$. On the other hand, it follows from the assumptions $\Lambda_0^{(p)}=\Lambda_{\mathcal{C},\rho}$ and
$\Lambda_0^{(s)}=\Lambda_{\mathcal{C},\rho}$ that $u_p=v_p$ and $u_s=v_s$ on $\partial\Omega$. Therefore,
\ben
\Lambda_0\psi=v=v_p+v_s=u_p+u_s=u=\Lambda_{\mathcal{C},\rho}\psi\quad\mbox{on}\quad \partial \Omega.
\enn

The proof is complete.
\end{proof}

In the rest of this section, using the transformation-elastodynamics approach based on Lemma~\ref{Lem:change}, we present the blow-up-a-point construction of an ideal full elastic cloak. This elastic cloak has been studied in the physics and engineering literature (cf. \cite{BGM,DG,DG03}), and we shall focus on analyzing the singular structure from a mathematical point of view.  Henceforth, we denote by $B_R$ the central ball of radius $R>0$, and $S_R$ the boundary of $B_R$,
i.e., $S_R=\{x: |x|=R\}$. We take $\Omega=B_2$ and $D_{1/2}=B_1$, following the notations introduced earlier in this section. Let $\{B_2; \mathcal{C}^{(0)}, 1\}$ be the isotropic homogeneous free space, and let $\Lambda_0=\Lambda_{\mathcal{C}^{(0)},1}$ on $S_2$ be the free NtD boundary operator. Consider the transformation
\be\label{F}
F:\left\{\begin{array}{lll}
B_2\backslash\{0\}&\rightarrow& B_2\backslash\overline{B}_1,\\
x&\rightarrow& y=(1+\frac{|x|}{2})\frac{x}{|x|}.
\end{array}\right.
\en
The transform $F$ blows up the origin in the reference space to $B_1$ while maps $B_2\backslash\{0\}$ to $B_2\backslash\overline{B}_1$
 and
keeps the sphere $S_2$ fixed. Using the transformation $F$, the reference medium in $B_2\backslash\{0\}$ is
then push-forwarded to form the transformation medium in $\{B_2\backslash\overline{B}_1; \mathcal{C}^{(c)}, \rho^{(c)}\}$ as follows
\be\label{eqq:31}
\mathcal{C}^{(c)}(y):=F_*(\mathcal{C}^{(0)})(x)|_{x=F^{-1}(y)},\quad \rho^{(c)}(y)=F_*(1)(x)|_{x=F^{-1}(y)},\quad y\in B_2\backslash\overline{B}_1.
\en
Let us consider the boundary value problem (\ref{I}) in $\Omega=B_2$ with
\be\label{eqq:32}
\{B_2; \mathcal{C}, \rho\}=\left\{\begin{array}{lll}
 \{B_2\backslash \overline{B}_{1}; \mathcal{C}^{(c)}, \rho^{(c)}\}&&\mbox{in}\quad B_2\backslash\overline{B}_{1},\\
 \{B_{1}; \mathcal{C}^{(a)}, \rho^{(a)}\}  &&\mbox{in}\quad B_{1}.
\end{array}\right.
\en which defines the NtD map $\Lambda_{\mathcal{C},\rho}$ on $S_2$. By Lemma \ref{Lem:change}, one may infer that
$\Lambda_{\mathcal{C},\rho}=\widetilde{\Lambda}_0$ on $S_2$, where $\widetilde{\Lambda}_0$ is the NtD map associated with the elastic configuration $\{B_2\backslash\{0\}; \mathcal{C}^{(0)}, 1\}$. Noting that the inhomogeneity of the elastic medium $\{B_2\backslash\{0\}; \mathcal{C}^{(0)}, 1\}$ is supported in a singular point, one may infer that $\widetilde{\Lambda}_0=\Lambda_0$, which in turn implies that
$\Lambda_{\mathcal{C},\rho}={\Lambda}_0$. That is, the construction \eqref{eqq:32} yields an ideal full elastic cloak. However, the above argument is rather heuristic and intuitive. Indeed, we shall show that the cloaking elastic medium parameters $\mathcal{C}^{(c)}$ and $\rho^{(c)}$ possess singularities, which make the attempt to rigorous justify the ideal elastic cloak highly nontrivial; see \cite{GKLU,LZ1} for the relevant discussions on the singular optical cloaking of acoustic and electromagnetic waves.

Next, let us determine the explicit expressions of the material parameters for the cloaking medium in \eqref{eqq:31}.
First, one can easily obtain that the Jacobian matrix of $F$ in \eqref{F} and its determinant are given as follows:
\ben
M(y)&=&\frac{r}{2(r-1)}(\textbf{\mbox{I}}-\hat{y}\otimes\hat{y})+\frac{1}{2}\hat{y}\otimes\hat{y},\quad \hat{y}:=y/r,\,r=|y|,\\
\mbox{det}(M)&=&\left\{\begin{array}{lll}
\frac{r}{4(r-1)},&&\mbox{if}\quad N=2,\\
\frac{r^2}{8(r-1)^2},&&\mbox{if}\quad N=3.
\end{array}\right.
\enn
Hence, by Lemma \ref{Lem:change}, the push-forwarded elastic tensor and density in $B_2\backslash\overline{B}_1$ are given by
\be\label{eq:product}
\mathcal{C}^{(c)}(y)=\frac{M(y)\diamond \mathcal{C}^{(0)}(x)|_{x=F^{-1}(y)}\diamond M(y)^\top}{\mbox{det}(M)},\quad
\rho^{(c)}(y)=[\mbox{det}(M)]^{-1},
\en where the operator $\diamond$ denotes the multiplication between a matrix and a fourth-rank tensor. More precisely, in view of the definition of $F_*$ in Lemma \ref{Lem:change}, we have for $\mathcal{C}^{(c)}=(C^{(c)}_{ijkl})_{i,j,k,l=1}^N$
\be\label{eq:productC}
\begin{pmatrix}
C^{(c)}_{i1k1} &C^{(c)}_{i1k2}& \cdots & C^{(c)}_{i1kN}\\
C^{(c)}_{i2k1}& C^{(c)}_{i2k2}& \cdots & C^{(c)}_{i2kN}\\
\vdots & \vdots & \cdots & \vdots \\
C^{(c)}_{iNk1}& C^{(c)}_{iNk2}& \cdots & C^{(c)}_{iNkN}
\end{pmatrix}= M\begin{pmatrix}
C^{(0)}_{i1k1} &C^{(0)}_{i1k2}& \cdots & C^{(0)}_{i1kN}\\
C^{(0)}_{i2k1}& C^{(0)}_{i2k2}& \cdots & C^{(0)}_{i2kN}\\
\vdots & \vdots & \cdots & \vdots \\
C^{(0)}_{iNk1}& C^{(0)}_{iNk2}& \cdots & C^{(0)}_{iNkN}
\end{pmatrix}M^\top/\mbox{det}(M)
\en for $i,k=1,2,\cdots,N$.
 Writing the fourth-rank tensor $\mathcal{C}^{(0)}$ in the tensor product form $(C^{(0)}_{ijkl})_{i,j,k,l=1}^N=(C^{(0)}_{i\cdot k\cdot})_{i,k=1}^N\otimes (C^{(0)}_{\cdot j\cdot l})_{j,l=1}^N$, with the second-rank tensors $(C^{(0)}_{i\cdot k\cdot})_{i,k=1}^N$ and $(C^{(0)}_{\cdot j\cdot l})_{j,l=1}^N$, then the multiplication operator $\diamond$ in the first relation of (\ref{eq:product}) is understood as the two-mode tensor-matrix product in the sense of (\ref{eq:productC}). It is easily seen that the push-forwarded density $\rho^{(c)}$ vanishes on the inner boundary of the cloaking device, namely $S_1$, in $\R^N$. To see the singularity of $\mathcal{C}^{(c)}$, we insert the expression of $M$ into $\mathcal{C}^{(c)}$
\be\no
\mathcal{C}^{(c)}(y)&=&\frac{r^2}{4(r-1)^2}(\textbf{\mbox{I}}-\hat{y}\otimes\hat{y})\diamond \mathcal{C}^{(0)} \diamond
(\textbf{\mbox{I}}-\hat{y}\otimes\hat{y})\;[\mbox{det}(M)]^{-1}\\ \no
&&+\frac{r}{4(r-1)}(\textbf{\mbox{I}}-\hat{y}\otimes\hat{y})\diamond \mathcal{C}^{(0)} \diamond (\hat{y}\otimes\hat{y})\;[\mbox{det}(M)]^{-1}\\ \no
&&+\frac{r}{4(r-1)}(\hat{y}\otimes\hat{y})\diamond \mathcal{C}^{(0)} \diamond(\textbf{\mbox{I}}-\hat{y}\otimes\hat{y})\;[\mbox{det}(M)]^{-1}\\ \label{transformedC}
&&+\frac{1}{4}(\hat{y}\otimes\hat{y})\diamond \mathcal{C}^{(0)} \diamond (\hat{y}\otimes\hat{y})\;[\mbox{det}(M)]^{-1}.
\en
Clearly, in two dimensions, the item in the first line in (\ref{transformedC}) has a singularity of the form $1/(r-1)$ as $r\rightarrow 1$, while the item in the fourth line vanishes on $S_1$.
The 3D spherical cloaks obtained by blowing up a single point turn out to be less singular than the 2D one, since there are no unbounded entries in the transformed elasticity tensor. Using the relations
  \ben
  (\hat{y}\otimes\hat{y}) (\hat{y}\otimes\hat{y})=(\hat{y}\otimes\hat{y}),\quad  (\textbf{I}-\hat{y}\otimes\hat{y}) (\hat{y}\otimes\hat{y})=0,
  \enn
one can deduce from (\ref{transformedC}) that
\ben
\mathcal{C}^{(c)}(y):(\hat{y}\otimes\hat{y})=\frac{1}{4 \det (M)}\,\mathcal{C}^{(0)}(y):(\hat{y}\otimes\hat{y})\rightarrow 0.
\enn as $|y|\rightarrow 1$.
This implies that the tensor
$\mathcal{C}^{(c)}$ does not satisfy the uniform Legendre ellipticity condition \eqref{LH} in $B_2$.

We have calculated the cloaking elastic medium parameters in the Cartesian coordinates using the identity (\ref{eq:productC}). The derivation of the cloaking medium tensor in the 2D polar coordinates $(r, \theta)$  or 3D spherical coordinates $(r,\theta,\varphi)$ can be proceeded as following.
 Noting the symmetric matrix $\hat{y}\otimes\hat{y}$ maps $y$ to its radial direction,
 one can see that the Jacobian matrix $M$ in the polar or spherical coordinates is of the form
 \be\label{M}
 M=\begin{pmatrix}
 1/2 & 0 \\
 0 & \frac{r}{2(r-1)}\,\textbf{I}_{N-1}
 \end{pmatrix}
 \en
 where $\textbf{I}_{N-1}$ denotes the $(N-1)\times (N-1)$ identify matrix.
Here we have employed the following conventional correspondence between indexes: $1\mapsto r$, $2\mapsto\theta$, $3\mapsto\varphi$ in $\R^N$, $N=2,3$. Recalling the \emph{Voigt} notation for tensor indices,
\ben
11\mapsto 1,\;22 \mapsto 2,\; 33 \mapsto 3,\; 23, 32 \mapsto 4,\;
13, 31 \mapsto 5,\; 12,21 \mapsto 6,
\enn we may write the elasticity tensor (\ref{C}) as
\be\label{CV}
C_{\alpha\beta}=\begin{pmatrix}
\lambda+2\mu &\lambda      &\lambda & 0&0 &0  \\
\lambda     & \lambda+2\mu &\lambda & 0&0 & 0 \\
\lambda     &\lambda       &    \lambda+2\mu &0 &0 &0 \\
        0    &      0       &       0           & \mu &0 & 0\\
        0    &      0       &   0              & 0& \mu &0  \\
         0   &   0          &       0           & 0& 0& \mu

\end{pmatrix},\quad
C_{\alpha\beta}=\begin{pmatrix}
\lambda+2\mu &\lambda      & 0\\
\lambda     & \lambda+2\mu & 0\\
0 & 0 & \mu
\end{pmatrix}\en
in three and two dimensions, respectively, with $\alpha,\beta=1,2,\cdots,N$. Using the polar coordinates in 2D, one can deduce from (\ref{eq:productC}), (\ref{M}) and (\ref{CV}) the transformed elasticity tensor $\mathcal{C}^{(c)}$ with eight nontrivial entries (see also \cite{BGM}):
\ben
&&C^{(c)}_{rrrr}=(\lambda+2\mu)(r-1)/r,\quad C^{(c)}_{\theta r \theta r}=\mu r/(r-1),\\
&&C^{(c)}_{r r\theta\theta}=C^{(c)}_{\theta \theta r r}=\lambda,\quad
C^{(c)}_{r\theta\theta r}=C^{(c)}_{\theta r r\theta}=\mu, \\
&& C^{(c)}_{r\theta r\theta}=\mu (r-1)/r,\quad C^{(c)}_{\theta\theta \theta\theta}= (\lambda+2\mu)r/(r-1).
\enn Physically, the vanishing of $C^{(c)}_{rrrr}$, $C^{(c)}_{r\theta r\theta}$ and the singularity of $C^{(c)}_{\theta\theta \theta\theta}$, $C^{(c)}_{\theta r \theta r}$ on $S_1$
imply an infinite velocity of the pressure and shear waves propagating along the inner side of the cloaking interface. The stress tensor for the 3D spherical cloak turns out to have 21 nontrivial entries in $B_2\backslash\overline{B}_1$ with ten of them vanishing on the sphere $S_1$; we refer to \cite{DG} for detailed discussions.

\section{Regularized blowup construction and cloak-busting inclusions}\label{sect:4}

It is seen from our earlier discussion that an elastic cloak can be obtained by using the transformation-elastodynamics approach through a blowup transformation. However, as shown in the last section, the blow-up-a-point would produce singular cloaking medium parameters, which pose server difficulties not only to the corresponding mathematical analysis but also to the practical realization. The singular cloaking medium comes from the use of the singular blowup transformation \eqref{F}. In order to avoid the singular structure, it is nature to regularize the singular blow-up-a-point transformation $F$ as follows. Let $h\in\mathbb{R}_+$ be a sufficiently small regularization parameter, and consider the transformation $F_h: B_2\backslash \overline{B}_h\rightarrow B_2\backslash\overline{B}_1$ defined by
 \ben
F_h(x):=\left\{\begin{array}{lll}
\left(\frac{2-2h}{2-h}+\frac{|y|}{2-h}\right)\,\frac{y}{|y|} && \mbox{in}\quad h\leq|y|\leq 2,\\
y/h&& \mbox{in}\quad |y|< h.
\end{array}\right.
\enn
It is easy to verify that $F_h: B_2\rightarrow B_2$ is bi-Lipschitz, orientation-preserving and $F_h|_{\partial B_2}={\mbox{Identity}}$. Moreover, $F_h$ degenerates to the singular transformation $F$ in \eqref{F} as $h\rightarrow +0$. Now, we consider the cloaking construction similar to \eqref{eqq:32} of the form
\be\label{eqq:32h}
\{B_2; \mathcal{C}, \rho\}=\left\{\begin{array}{lll}
 \{B_2\backslash \overline{B}_{1}; \mathcal{C}_h^{(c)}, \rho_h^{(c)}\}&&\mbox{in}\quad B_2\backslash\overline{B}_{1},\\
 \{B_{1}; \mathcal{C}^{(a)}, \rho^{(a)}\}  &&\mbox{in}\quad B_{1},
\end{array}\right.
\en
with the cloaking medium given by
\be\label{eqq:31h}
\mathcal{C}_h^{(c)}(y):=(F_h)_*(\mathcal{C}^{(0)})(x)|_{x=F_h^{-1}(y)},\quad \rho_h^{(c)}(y)=(F_h)_*(1)(x)|_{x=F_h^{-1}(y)},\quad y\in B_2\backslash\overline{B}_1.
\en
We let $\Lambda_{\mathcal{C},\rho}^h$ denote the NtD map associated with the elastic configuration in \eqref{eqq:32h}. Since $F_h$ degenerates to the singular blow-up-a-point transformation as $h\rightarrow +0$, one may expect that $\Lambda_{\mathcal{C},\rho}^h\rightarrow \Lambda_0$ as $h\rightarrow+0$. That is, \eqref{eqq:32h} would produce an approximate elastic cloak, namely a near-cloak. However, in what follows, we shall show that no matter how small $h\in\mathbb{R}_+$ is, there always exists a certain elastic inclusion $\{B_1; \mathcal{C}^{(a)}, \rho^{(a)}\}$ depending on $h$, which defies any attempt to achieve the near-cloak. Indeed, we shall show that for any $h\in\mathbb{R}_+$, there exists a certain elastic inclusion $\{B_1;\mathcal{C}^{(a)}, \rho^{(a)}\}$ such that the corresponding $\Lambda_{\mathcal{C},\rho}^h$ is not even well-defined due to resonance. In doing so, we first note that by using Lemma~\ref{Lem:change}, the NtD map associated with $\{B_2;\mathcal{C},\rho\}$ in \eqref{eqq:32h} is the same as the one associated with the virtual elastic configuration
\be\label{eqq:virtual11}
\{B_2; \tilde{\mathcal{C}}, \tilde{\rho}\}:=(F_h^{-1})_*\{B_2;\mathcal{C},\rho\}=\left\{\begin{array}{lll}
 \{B_2\backslash \overline{B}_{h}; \mathcal{C}^{(0)}, 1\}&&\mbox{in}\quad B_2\backslash\overline{B}_{h},\\
 \{B_{h}; \tilde{\mathcal{C}}^{(a)}, \tilde{\rho}^{(a)}\}  &&\mbox{in}\quad B_{h},
\end{array}\right.
\en
where $\{B_{h}; \tilde{\mathcal{C}}^{(a)}, \tilde{\rho}^{(a)}\}=(F_h^{-1})_*\{B_1; \mathcal{C}^{(a)}, \rho^{(a)}\}$. That is, the NtD map $\Lambda_{\mathcal{C},\rho}^h$ characterizes the boundary effect due to the small inclusion $\{B_{h}; \tilde{\mathcal{C}}^{(a)}, \tilde{\rho}^{(a)}\}$ supported in $B_h$.
Since the target elastic medium $\{B_1;\mathcal{C}^{(a)}, \rho^{(a)}\}$ is arbitrary but regular, we see that the content of the small inclusion $\{B_{h}; \tilde{\mathcal{C}}^{(a)}, \tilde{\rho}^{(a)}\}$ is in principle also arbitrary but regular. Hence, in order to show the failure of the near elastic cloaking construction \eqref{eqq:32h}, it is sufficient to show that for any $h\in\mathbb{R}_+$, there always exists a certain $\{B_{h}; \tilde{\mathcal{C}}^{(a)}, \tilde{\rho}^{(a)}\}$ such that the NtD map $\Lambda_{\tilde{\mathcal{C}}, \tilde{\rho}}$ associated with the elastic configuration $\{B_2; \tilde{\mathcal{C}}, \tilde{\rho}\}$ is not well-defined due to resonance.

We would like to appeal for a bit more general study in two dimensions only by considering the following Lam\'e system:
\be\label{Homogeneous-N}\left\{\begin{array}{lll}
\nabla\cdot (\mathcal{C}^{(0)}: \nabla u_1)+\omega^2\rho_1 u_1=0\quad\mbox{in}\quad r_0<|x|<r_1,\\
\nabla\cdot (\mathcal{C}^{(0)}: \nabla u_0)+\omega^2\rho_0 u_0=0\quad\mbox{in}\quad |x|<r_0,\\
T u_1=0\quad\mbox{on}\quad |x|=r_1,\\
u_1=u_2,\quad T u_1=T u_2\quad\mbox{on}\quad |x|=r_0.
\end{array}\right.
\en
Here, $\rho_1$, $\rho_0$ are two positive constants, $\omega\in \R_+$ is a fixed frequency, and the elastic tensor $\mathcal{C}^{(0)}$ is given by (\ref{C}) with fixed Lam\'e constants $\lambda,\mu$ in $|x|<r_1$.  In what follows we shall verify that, for any $r_0<r_1$, there always exist constant densities $\rho_0, \rho_1>0$ for which the system \eqref{Homogeneous-N} admits non-trivial solutions. This implies that resonance occurs and the boundary NtD map is not well-defined for the system. Clearly, this also indicates the failure of the near-cloaking construction \eqref{eqq:32h} due to the existence of resonant inclusions.

Let $(r,\varphi)$ be the polar coordinates of $x=(x_1,x_2)\in \R^2$.
We look for special solutions to (\ref{Homogeneous-N}) of the form
\ben
u_j=c_j\,\nabla_x(J_0(k_p^{(j)}r)),\quad \nabla_x:=(\partial_{x_1},\partial_{x_2}),\quad k_p^{(j)}:=\omega^2\sqrt{\rho_j/(\lambda+2\mu)},\quad c_j\in \C,
\enn
i.e., $u_j$ consists of spherically-symmetric compressional waves only. Here $J_0$ denotes the Bessel function of order zero.
Similar examples can be constructed for general elastic waves by using the Bessel function of order $n$.
Simple calculations show that
\be\label{uj}
u_j=c_j\,k_p^{(j)}\, J_0^{'}(k_p^{(j)}r)\begin{pmatrix}
\cos\varphi \\ \sin\varphi\end{pmatrix}.
\en
Since $(\Delta+(k_p^{(j)})^2)J_0(k_p^{(j)}r)=0$, one can readily check that $u_j$ satisfy the Navier equations in
(\ref{Homogeneous-N}) and that
\ben
Tu_1=(k_p^{(1)})^2\begin{pmatrix}
\cos\varphi \\ \sin\varphi\end{pmatrix} \left[2\mu J_0^{''}(k_p^{(1)}r_1)-\lambda J_0(k_p^{(1)}r_1)\right]\,c_1\quad\mbox{on}\quad |x|=r_1.
\enn
On the other hand, the transmission conditions on $|x|=r_0$ in (\ref{Homogeneous-N}) are equivalent to (see Lemma \ref{Lem:stress} below)
\be\label{transmission}
u_1=u_2,\quad \frac{u_1}{\partial r}= \frac{u_2}{\partial r}\quad\mbox{on}\quad |x|=r_0,
\en
due to the invariance of the Lam\'e constants on both sides of the interface. Inserting (\ref{uj}) into (\ref{transmission}) yields the linear system
\ben
\begin{pmatrix}
(k_p^{(1)}r_0)\,J_0^{'}(k_p^{(1)}r_0) & -(k_p^{(2)}r_0)\,J_0^{'}(k_p^{(2)}r_0) \\
(k_p^{(1)}r_0)^2\,J_0^{''}(k_p^{(1)}r_0) & -(k_p^{(2)}r_0)^2\,J_0^{''}(k_p^{(2)}r_0)
\end{pmatrix}\begin{pmatrix}
c_1 \\ c_2
\end{pmatrix}=0.
\enn
Introduce the functions
\ben
f(t):=2\mu J_0^{''}(t)-\lambda J_0(t),\quad g(t):=\frac{J_0^{'}(t)}{t\,J_0^{''}(t)}.
\enn
Since there are infinitely many positive zeros of $f$ tending to infinity, we may choose a $\rho_1>0$ such that
$f(k_p^{(1)}r_1)=0$. Set $t_1:=k_p^{(1)}r_0$. Then we can find a $t_2\in\R_+$, $t_2\neq t_1$ such that
\ben
g(t_1)=g(t_2)\quad\mbox{if}\quad J_0^{''}(t_1)\neq 0;\quad
J_0^{''}(t_2)=0\quad\mbox{if}\quad J_0^{''}(t_1)=0.
\enn
Now, the number $\rho_2>0$ is chosen such that the relation $k_p^{(2)}r_0 =t_2$ holds.
With those choices of $\rho_1$ and $\rho_2$, we have the homogeneous Neumann boundary condition $Tu_1=0$ on $|x|=r_1$. Moreover,
due to the vanishing of
the determinant of the matrix,
 one can find a nontrivial solution ($c_1, c_2$) to the above linear system so that the transmission conditions hold true.  Hence we have constructed a non-trivial pair of solutions $(u_1, u_2)$ to
(\ref{Homogeneous-N}) for any fixed $r_0<r_1$ and $\omega\in \R_+$.

Finally, we give the proof of  (\ref{transmission}).

\begin{lem}\label{Lem:stress}
Let $D\subset \Omega$ be a smooth domain in $\R^2$. Assume that $u_1\in H^1(\Omega\backslash\overline{D})^2$, $u_2\in H^1(D)^2$ satisfy the transmission condition
$u_1=u_2$, $Tu_1=Tu_2$ on $\partial D$. Then $\partial_\nu u_1=\partial_{\nu} u_2$ on $\partial D$.
\end{lem}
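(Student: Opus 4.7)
The plan is to exploit the tangential rigidity implied by $u_1 = u_2$ on $\partial D$ together with the explicit formula \eqref{stress-2D} for the 2D traction. Set $w := u_1 - u_2$ interpreted as a jump on $\partial D$; by hypothesis $w = 0$ and $T w = 0$ there. Since $w$ vanishes along the entire curve $\partial D$, every tangential derivative of $w$ must vanish on $\partial D$. The only surviving component of $\nabla w$ at a boundary point is therefore in the normal direction; concretely, on $\partial D$ one has the pointwise identity $\nabla w = (\partial_\nu w) \otimes \nu$, i.e.\ $(\partial_j w_i)|_{\partial D} = (\partial_\nu w)_i \, \nu_j$.

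Next, I would decompose the normal derivative in the orthonormal frame $\{\nu,\tau\}$ by writing $\partial_\nu w = a\,\nu + b\,\tau$ with $a = \partial_\nu w \cdot \nu$ and $b = \partial_\nu w \cdot \tau$. Substituting the identity above into the scalar quantities appearing in \eqref{stress-2D} gives
\begin{equation*}
\operatorname{div} w \;=\; \sum_i (\partial_\nu w)_i \nu_i \;=\; a, \qquad
\partial_2 w_1 - \partial_1 w_2 \;=\; (\partial_\nu w)_1 \nu_2 - (\partial_\nu w)_2 \nu_1 \;=\; -\,b,
\end{equation*}
where the last equality uses $\tau = (-\nu_2,\nu_1)$.

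Inserting these into the traction formula \eqref{stress-2D} applied to $w$ and using $Tw = 0$ on $\partial D$ yields
\begin{equation*}
0 \;=\; 2\mu(a\nu + b\tau) + \lambda\, a\, \nu + \mu\, \tau(-b) \;=\; (\lambda + 2\mu)\,a\,\nu \;+\; \mu\, b\, \tau.
\end{equation*}
Because $\{\nu,\tau\}$ is linearly independent and the Lamé parameters satisfy $\mu > 0$ and $\lambda + 2\mu > 0$, both coefficients vanish, giving $a = b = 0$. Hence $\partial_\nu w = 0$, i.e.\ $\partial_\nu u_1 = \partial_\nu u_2$ on $\partial D$.

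The main issue to be handled carefully is the sense in which the one-sided traces of $\nabla u_j$ on $\partial D$ exist, since a priori $u_j$ is only $H^1$; this is resolved by interior elliptic regularity for solutions of the Lamé system away from the interface (or, if one wishes, by interpreting the traces as elements of $H^{-1/2}(\partial D)^2$ and carrying out the above algebra in a weak sense against tangential test functions). The algebraic step itself is essentially forced once the tangential-normal decomposition of $\nabla w$ on $\partial D$ is in hand.
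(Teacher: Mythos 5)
Your argument is correct and follows the same strategy as the paper: use $u_1=u_2$ on $\partial D$ to kill the tangential derivative of the jump, substitute the resulting normal-derivative-only form of $\nabla w$ into the explicit 2D traction formula \eqref{stress-2D}, and conclude from $\mu>0$, $\lambda+2\mu>0$ that the normal derivative of the jump vanishes. The only cosmetic difference is that the paper packages the normal-derivative contribution as a matrix $A(\lambda,\mu,\nu)$ and computes $\det A=\mu(\lambda+2\mu)$, whereas you diagonalize it directly by writing $\partial_\nu w=a\nu+b\tau$, which amounts to the same invertibility statement.
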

\begin{proof}
We shall carry out the proof by making use of the definition (\ref{stress-2D}).
Let $\nu=(\nu_1,\nu_2)$, $\tau=(-\nu_2,\nu_1)$ denote the normal and tangential directions on $\partial D$, respectively. Set $u_j=(u_j^{(1)}, u_j^{(2)})^\top$ for $j=1,2.$
Using the formula
\ben
\partial_1 w=\nu_1 \partial_{\nu} w-\nu_2 \partial_{\tau} w,\quad
\partial_2 w=\nu_2 \partial_{\nu} w+\nu_1 \partial_{\tau} w,
\enn
we may separate the normal and tangential derivatives involved in the stress operator. Consequently,
the stress operator $Tu_j$ on $\partial D$ can be rewritten as
\be\no
Tu_j&=&\begin{pmatrix}
\mu+(\lambda+\mu)\nu_1^2  & (\lambda+\mu) \nu_1 \nu_2\\
(\lambda+\mu) \nu_1 \nu_2 & \mu+(\lambda+\mu)\nu_2^2
     \end{pmatrix}\begin{pmatrix}\partial_{\nu} u_j^{(1)}\\ \partial_{\nu} u_j^{(2)}
 \end{pmatrix}\\ \no
&&+ \begin{pmatrix}
-(\lambda+\mu)\nu_1\nu_2  & \lambda\nu_1^2-\mu\nu_2^2\\
-\lambda\nu_2^2+\mu\nu_1^2 & (\lambda+\mu)\nu_1\nu_2
     \end{pmatrix}\begin{pmatrix}\partial_{\tau} u_j^{(1)}\\ \partial_{\tau} u_j^{(2)}
 \end{pmatrix}\\ \label{eq:AB}
&=:& A(\lambda,\mu,\nu)\,\partial_{\nu} u_j + B(\lambda,\mu,\nu)\,\partial_{\tau} u_j.
\en
Set $U=u_1-u_2$.  From (\ref{eq:AB}) and the assumptions $Tu_1=Tu_2$, $u_1=u_2$ on $\partial D$ we see
\be\label{eq:TU}
0=T U=A(\lambda,\mu,\nu)\,\partial_{\nu} U\quad\mbox{on}\quad \partial D.
\en
Direct calculations yield that the determinant of $A(\lambda,\mu,\nu)$ is given by
\ben
\mbox{det}(A)=\mu (\lambda+2\mu)>0.
\enn
Hence, by (\ref{eq:TU}) we obtain $\partial_{\nu} u_1=\partial_{\nu} u_2$ on $\partial D$.

The proof is complete.
\end{proof}

\section{Nearly cloaking the elastic waves}

\subsection{Our near-cloaking scheme}

Through the discussion in Section~\ref{sect:4}, we see that the regularized blow-up-a-small-ball construction fails due to the existence of resonant inclusions, namely the cloak-busting inclusions. We would like to mention that similar phenomena have been observed in regularized optical cloaks; see \cite{BL,BLZ,KOVW,LS}. In order to defeat the resonance, a natural idea is to introduce a certain damping mechanism. This motivates us to develop a near-cloaking scheme by incorporating a suitable lossy layer right between the cloaked region and cloaking layer.

We are in a position to present the proposed near-cloaking scheme. Let $\Omega$ and $D$ be as described in Section~\ref{sect:3}. Let $h\in\mathbb{R}_+$ be a small regularization parameter and let $F_h$ be a bi-Lipschitz and orientation-preserving mapping such that
\ben
F_h: \overline{\Omega}\backslash D_h\rightarrow \overline{\Omega}\backslash D,\quad F_h(\partial \Omega)=\partial \Omega.
\enn
Introduce the mapping
\ben
F(x)=\left\{\begin{array}{lll}
F_h(x)&&\mbox{for}\quad x\in \overline{\Omega}\backslash D_h,\\
x/h&&\mbox{for}\quad x\in  D_h.
\end{array}\right.
\enn
Clearly, $F:\Omega\rightarrow\Omega$ is bi-Lipschitz and orientation-preserving and $F(\partial \Omega)=\partial \Omega$.

Our proposed regularized near-cloaking construction takes the following general form
\be\label{GeneralOmega}
\{\Omega; \mathcal{C}, \rho\}=\left\{\begin{array}{lll}
 \{\Omega\backslash \overline{D}_{1/2}; \mathcal{C}^{(c)}, \rho^{(c)}\}&&\mbox{in}\quad \Omega\backslash\overline{D}_{1/2},\\
 \{D_{1/2}; \mathcal{C}^{(a)}, \rho^{(a)}\}  &&\mbox{in}\quad D_{1/2},
\end{array}\right.
\en
where
\be\label{GeneralOmega2}
\{\Omega\backslash\overline{D}_{1/2}; \mathcal{C}^{(c)}, \rho^{(c)}\}=\left\{\begin{array}{lll}
 \{\Omega\backslash \overline{D}; \mathcal{C}^{(1)}, \rho^{(1)}\}&&\mbox{in}\quad \Omega\backslash\overline{D},\\
 \{D\backslash\overline{D}_{1/2}; \mathcal{C}^{(2)}, \rho^{(2)}\}  &&\mbox{in}\quad D\backslash \overline{D}_{1/2},
\end{array}\right.
\en
with
\be\label{device-1}
\begin{split}
\{\Omega\backslash\overline{D}; \mathcal{C}^{(1)}, \rho^{(1)}\}&=&(F_h)_{*}\{ \Omega\backslash\overline{D}_h; \mathcal{C}^{(0)}, 1  \},\\
\{D\backslash\overline{D}_{1/2}; \mathcal{C}^{(2)}, \rho^{(2)}\}&=&(F_h)_{*}\{ D_h\backslash\overline{D}_{h/2}; \tilde{\mathcal{C}}^{(2)}, \tilde{\rho}^{(2)}  \}.\end{split}
\en
In \eqref{device-1}, the elastic medium in $D_h\backslash\overline{D}_{h/2}$ is given by
\be\label{construction}
\{ D_h\backslash\overline{D}_{h/2}; \tilde{\mathcal{C}}^{(2)}, \tilde{\rho}^{(2)}  \},\quad \tilde{\mathcal{C}}^{(2)}=\gamma\, h^{2+\delta}\,\mathcal{C}^{(0)},\quad \tilde{\rho}^{(2)}=\alpha+i\beta,
\en
where $\alpha,\beta, \gamma$ and $\delta$ are fixed positive constants. Here, we note that in \eqref{construction}, we introduce a critical lossy layer $\{D_h\backslash\overline{D}_{h/2}; \tilde{\mathcal{C}}^{(2)},\tilde{\rho}^{(2)}\}$, wherein $\beta$ is the damping parameter of the elastic medium. We next present the main theorem in assessing the near-cloaking performance of the above proposed construction. Henceforth, for two Banach spaces $\mathscr{X}$ and $\mathscr{Y}$, we let $\mathscr{L}(\mathscr{X},\mathscr{Y})$ denote the Banach space of the linear functionals from $\mathscr{X}$ to $\mathscr{Y}$. Moreover, we let $C$ denote a generic positive constant, which may change in different estimates, but should be clear in the context. Then, we have

\begin{thm}\label{Theorem}
Assume $-\omega^2   $ is not an eigenvalue of the elliptic operator  $\mathcal{L}$ on $\Omega$ with the traction-free boundary condition. Let $\Lambda_{\mathcal{C},\rho}$ be the NtD map corresponding to the elastic configuration \eqref{GeneralOmega}-\eqref{construction}, and let $\Lambda_0$ be the free NtD map for the Lam\'e system. Then there exists a constant $h_0\in\mathbb{R}_+$ such that when $h<h_0$,
\be\label{eq:mainestimate}
||\Lambda_{\mathcal{C},\rho}-\Lambda_0||_{\mathscr{L}(H^{-1/2}(\partial \Omega)^N, H^{1/2}(\partial \Omega)^N)}\leq C\,h^N,
\en
 where $C$ is a positive constant independent of $h$, $\mathcal{C}^{(a)}$, $\rho^{(a)}$ and $\delta$.
\end{thm}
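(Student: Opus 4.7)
The plan is to first reduce everything to the \emph{virtual} space via transformation invariance, then exploit the lossy layer to decouple the arbitrary core from the exterior, and finally apply small-inclusion asymptotics to read off the $h^N$ rate from the volume of the pulled-back inclusion.

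\textbf{Step 1 (Reduction to the virtual space).} By Lemma~\ref{Lem:change} applied to the global bi-Lipschitz map $F$ (which equals Identity on $\partial\Omega$), the NtD map $\Lambda_{\mathcal{C},\rho}$ associated with the physical configuration \eqref{GeneralOmega}--\eqref{construction} coincides with the NtD map $\Lambda_{\tilde{\mathcal{C}},\tilde{\rho}}$ of the pulled-back virtual configuration
\[
\{\Omega;\tilde{\mathcal{C}},\tilde{\rho}\}=\begin{cases}
\{\mathcal{C}^{(0)},\,1\}&\mbox{in}\ \Omega\backslash\overline{D}_h,\\
\{\gamma h^{2+\delta}\mathcal{C}^{(0)},\,\alpha+i\beta\}&\mbox{in}\ D_h\backslash\overline{D}_{h/2},\\
\{\tilde{\mathcal{C}}^{(a)},\,\tilde{\rho}^{(a)}\}&\mbox{in}\ D_{h/2},
\end{cases}
\]
where the core $\{D_{h/2};\tilde{\mathcal{C}}^{(a)},\tilde{\rho}^{(a)}\}=(F^{-1})_*\{D_{1/2};\mathcal{C}^{(a)},\rho^{(a)}\}$ is arbitrary but regular. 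It therefore suffices to bound $\|\Lambda_{\tilde{\mathcal{C}},\tilde{\rho}}-\Lambda_0\|$ uniformly in the content of $D_{h/2}$.

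\textbf{Step 2 (Lossy-layer isolation --- the main obstacle).} Given $\psi\in H^{-1/2}(\partial\Omega)^N$, let $u\in H^1(\Omega)^N$ be the solution of the Lam\'e transmission problem in $\{\Omega;\tilde{\mathcal{C}},\tilde{\rho}\}$. Testing the weak formulation against $\overline{u}$ and extracting the imaginary part, the positivity of $\ima\tilde{\rho}=\beta$ on the shell yields
\[
\beta\,\omega^2\,\|u\|_{L^2(D_h\backslash\overline{D}_{h/2})^N}^2\leq \bigl|\ima\langle\psi,u\rangle_{\partial\Omega}\bigr|.
\]
Rescaling $y=x/h$ maps the shell onto the fixed annulus $D\backslash\overline{D}_{1/2}$; under this rescaling the stiffness $\gamma h^{2+\delta}\mathcal{C}^{(0)}$ becomes of order $h^\delta$ (the gradient contributes $h^{-2}$, absorbing $h^2$ of the prefactor), while the damping density stays of order one. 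A variational and layer-potential analysis on the rescaled annulus, combined with Korn--type inequalities and Lemma~\ref{Lem:stress} at the inner interface $S_{h/2}$, then shows that up to an $O(h^\delta)$ correction the boundary condition transmitted to the core is the \emph{traction-free} condition. Consequently one obtains the crucial uniform a priori bound
\[
\|u\|_{H^1(D_h)^N}+\|u\|_{H^{1/2}(\partial D_h)^N}\leq C\,h^{N/2}\,\|\psi\|_{H^{-1/2}(\partial\Omega)^N},
\]
with $C$ independent of $h$ and of the core content.

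\textbf{Step 3 (Small-inclusion asymptotics).} With the uniform bound from Step~2, let $v\in H^1(\Omega)^N$ solve the free-space Navier system \eqref{Lame-0} with the same traction $\psi$, and set $w:=u-v$. Applying Green's identity against the free-space Neumann tensor $N_\omega(\cdot,\cdot)$ for $\mathcal{L}+\omega^2 I$ in $\Omega$, one obtains the representation
\[
w(x)=\int_{D_h}\bigl[(\tilde{\mathcal{C}}-\mathcal{C}^{(0)}):\nabla u(y)\bigr]:\nabla_y N_\omega(x,y)\,dy+\omega^2\int_{D_h}\bigl(\tilde{\rho}(y)-1\bigr)\,u(y)\cdot N_\omega(x,y)\,dy
\]
for $x\in\partial\Omega$. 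Using $|D_h|=O(h^N)$, the Cauchy--Schwarz inequality, smoothness of $N_\omega(x,y)$ for $x\in\partial\Omega$, $y\in D_h$, and the uniform $H^1(D_h)$ bound from Step~2, one arrives at $\|w\|_{H^{1/2}(\partial\Omega)^N}\leq C\,h^N\|\psi\|_{H^{-1/2}(\partial\Omega)^N}$, which is \eqref{eq:mainestimate}.

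\textbf{Where the difficulty concentrates.} Step~2 is clearly the heart of the matter: one must verify that the calibration $\tilde{\mathcal{C}}^{(2)}=\gamma h^{2+\delta}\mathcal{C}^{(0)}$, $\tilde{\rho}^{(2)}=\alpha+i\beta$ simultaneously (i) kills the resonant inclusions of Section~\ref{sect:4} so that $\Lambda_{\tilde{\mathcal{C}},\tilde{\rho}}$ is even well-defined with norm bounded uniformly in $h$, and (ii) keeps the shell's own scattering contribution below $h^N$ so that the asymptotic in Step~3 is not polluted. Carrying this out rigorously --- and in the process justifying the final remark of the introduction that the lossy layer is a finite realization of a traction-free lining --- will require a careful matching of the damping-based energy estimate with layer-potential estimates on the rescaled shell $D\backslash\overline{D}_{1/2}$, together with Lemma~\ref{Lem:stress} to reduce the transmission conditions at the regular interface $\partial D_h$ to conditions on normal derivatives that are amenable to standard elliptic regularity.
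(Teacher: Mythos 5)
Your Step 1 (reduction to the virtual configuration via the transformation invariance of Lemma~\ref{Lem:change}) matches the paper exactly, and your identification of Step 2 as ``the heart of the matter'' is correct. However, the a priori bound you claim in Step 2 is a genuine gap, and it is precisely the bound the paper is careful \emph{not} to need.

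You assert
\[
\|u\|_{H^1(D_h)^N}+\|u\|_{H^{1/2}(\partial D_h)^N}\leq C\,h^{N/2}\|\psi\|_{H^{-1/2}(\partial\Omega)^N}
\]
with $C$ uniform in the core content $\{D_{h/2};\tilde{\mathcal{C}}^{(a)},\tilde\rho^{(a)}\}$. The imaginary-part energy identity only controls the $L^2$ norm of $u$ on the lossy shell $D_h\backslash\overline{D}_{h/2}$ (plus a term weighted by $\ima\tilde\rho^{(a)}$, which may vanish): nothing in that identity constrains $\|u\|_{H^1(D_{h/2})}$ when the core is an arbitrary lossless regular medium. A near-resonant core can store energy that the shell damping caps only in $L^2$ on the shell, not in $H^1$ across $D_h$. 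Moreover, your energy identity is written with $\ima\langle\psi,u\rangle_{\partial\Omega}$ on the right, which is $O(\|\psi\|^2)$ a priori; the paper's Lemma~\ref{Lem-1} instead subtracts the free-field identity so that the right side is $\ima\langle\psi,\tilde u-u_0\rangle_{\partial\Omega}$, which is the very small quantity being estimated --- this subtraction is what makes the later Young-inequality bootstrap close. The paper's way around the core problem is different from ``approximate traction-free transmission'': it controls only the rescaled \emph{traction} $\Psi^+=T^+\tilde u(h\,\cdot)$ on $\partial D$, and only in the weak norm $H^{-3/2}(\partial D)^N$, via the duality argument of Lemma~\ref{Lem-2}. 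Crucially, the extension Lemma~\ref{Lem:extension} supplies a test function $w$ with $w=\phi$, $Tw=0$ on $\partial D$ and $w\equiv 0$ in $D_{1/2}$, so that the Betti-formula integration by parts never touches the core; only the shell $L^2$ norm of $\tilde u$ (controlled by Lemma~\ref{Lem-1}) enters. The multiplicative gain $\Psi^+=\gamma h^{2+\delta}\Psi^-$ then supplies the $h^{2+\delta}$ smallness.

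Your Step 3 inherits this gap: the volume representation over $D_h$ requires $\nabla u$ in $L^2(D_h)$ including the core, which you have not bounded. The paper instead works with the traction trace on $\partial D_h$: Lemma~\ref{Lem-3} splits the boundary effect into a ``geometric'' small-inclusion part with traction $Tu_0$ on $\partial D_h$ (handled by the layer-potential estimate of Lemma~\ref{Au-Le-1}, giving $h^N$) and the ``unknown traction'' part driven by $\Psi^+$ (handled by the exterior scattering estimate of Lemma~\ref{Au-Le-2}, giving $h^{N-1}\|\Psi^+\|_{H^{-3/2}}$). Combining with Lemma~\ref{Lem-2} and Young's inequality then closes the argument. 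So while your outline identifies the right ingredients --- transformation invariance, damping, small-inclusion asymptotics --- the central technical step needs to be replaced by a trace-level, negative-norm duality estimate that deliberately avoids any control of the field in the unknown core.
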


By Theorem \ref{Theorem}, we see the construction (\ref{GeneralOmega})-\eqref{construction} produces a near-cloak within $h^N$ of the ideal cloak in $\R^N$. Moreover, since the estimate \eqref{eq:mainestimate} is independent of the content being cloak, namely $\{D_{1/2}; \mathcal{C}^{(a)}, \rho^{(a)}\}$, it is capable of nearly cloaking an arbitrary target elastic medium. We would like to remark that $\delta\geq 0$ in \eqref{construction} is a free parameter and one may simply choose it to be $0$.

In order to prove Theorem \ref{Theorem}, we first note that by using Lemma~\ref{Lem:change}
\begin{equation}\label{eq:equivalentestimate}
\Lambda_{\mathcal{C},\rho}=\Lambda_{\tilde{\mathcal{C}},\tilde{\rho}},
\end{equation}
where
\be\label{device-2}
\{\Omega; \tilde{\mathcal{C}},\tilde{\rho}\}=(F^{-1})_*\{\Omega; C, \rho\}=\left\{\begin{array}{lll}
\mathcal{C}^{(0)}   &&\mbox{in}\quad \Omega\backslash \overline{D}_h,\\
\tilde{\mathcal{C}}^{(2)}, \tilde{\rho}^{(2)} &&\mbox{in}\quad D_h\backslash \overline{D}_{h/2},\\
\tilde{\mathcal{C}}^{(a)}, \tilde{\rho}^{(a)} &&\mbox{in}\quad D_{h/2},\\
\end{array}\right.
\en
with
\ben
\{D_{h/2}; \tilde{\mathcal{C}}^{(a)},\tilde{\rho}^{(a)}\}=(F^{-1})_*\{D_{1/2}; \mathcal{C}^{(a)}, \rho^{(a)}\}.
\enn
Let $\tilde{u}\in H^1(\Omega)^N$ be the solution to the Lam\'e system associated with the elastic configuration $\{\Omega;\tilde{\mathcal{C}},\tilde{\rho}\}$; that is
\be\label{Lame-2}
\nabla\cdot (\tilde{\mathcal{C}}: \nabla \tilde{u})+\omega^2\tilde{\rho} \tilde{u}=0\quad\mbox{in}\quad \Omega,\quad \mathcal{N}_{\tilde{\mathcal{C}}}\,\tilde{u}=\psi\in H^{-1/2}(\partial\Omega)^N\quad\mbox{on}\quad\partial \Omega,
\en

Let $\tilde{u}:=u\circ F^{-1}$. Then, by Lemma \ref{Lem:change}, $\tilde{u}$ solves
 the boundary value problem
\be\label{Lame-2}
\nabla\cdot (\tilde{\mathcal{C}}: \nabla \tilde{u})+\omega^2\tilde{\rho} \tilde{u}=0\quad\mbox{in}\quad \Omega,\quad \mathcal{N}_{\tilde{\mathcal{C}}}\,\tilde{u}=\psi\quad\mbox{on}\quad\partial \Omega,
\en
and let $u_0\in H^{1}(\Omega)^N$ be the solution in the free space; see \eqref{Lame-0}. By \eqref{eq:equivalentestimate}, we see that Theorem \ref{Theorem} immediately follows from

\begin{thm}\label{Theorem-1}
Assume $-\omega^2$ is not an eigenvalue of the elliptic operator  $\mathcal{L}$ on $\Omega$ with the traction-free boundary condition. Let $\tilde{u}$ and $u_0$ be solutions to (\ref{Lame-2}) and (\ref{Lame-0}), respectively.   Then there exists a constant $h_0\in\mathbb{R}_+$ such that when $h<h_0$,
\be\label{estimate-1}
||\tilde{u}-u_0||_{H^{1/2}(\partial \Omega)^N}\leq  C\,h^N\, ||\psi||_{H^{-1/2}(\partial \Omega)^N},
\en
where $C$ is a positive constant independent of $h$, $\psi$, $\tilde{\mathcal{C}}$, $\tilde{\rho}$ and $\delta$.
\end{thm}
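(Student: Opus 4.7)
The plan is to work with the equivalent \emph{virtual} configuration $\{\Omega;\tilde{\mathcal{C}},\tilde{\rho}\}$ of \eqref{device-2}, in which the medium agrees with the free one outside the small ball $D_h$ of radius $O(h)$, so that Theorem~\ref{Theorem-1} becomes a quantitative small-inclusion estimate; the announced $h^N$ rate is exactly the scaling expected for a scatterer of volume $|D_h|=O(h^N)$. The argument has three ingredients: a uniform-in-$h$ and uniform-in-interior-content a-priori bound on $\tilde u$, trace estimates on $\partial D_h$ obtained by rescaling $y=x/h$, and a Green's-function representation of the difference $w:=\tilde u-u_0$ that delivers the bound on $\partial\Omega$.

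\emph{A-priori bound.} Testing the variational form of \eqref{Lame-2} against $\overline{\tilde u}$ and taking imaginary parts yields, thanks to the damping $\beta>0$,
\[
\omega^{2}\beta\,\|\tilde u\|_{L^{2}(D_h\setminus\overline{D}_{h/2})^{N}}^{2}\le \bigl|\langle\psi,\tilde u\rangle_{\partial\Omega}\bigr|,
\]
which by itself rules out the resonant inclusions exhibited in Section~\ref{sect:4}. A standard Fredholm-plus-compactness/contradiction argument localized on $\Omega\setminus\overline{D}_{h/2}$, in the spirit of the lossy near-cloaking analyses of \cite{KOVW,Liu,BL,BLZ}, upgrades this to $\|\tilde u\|_{H^{1}(\Omega\setminus\overline{D}_{h/2})^{N}}\le C\|\psi\|_{H^{-1/2}(\partial\Omega)^{N}}$, the constant $C$ being independent of both $h$ and the interior medium $(\tilde{\mathcal C}^{(a)},\tilde\rho^{(a)})$: the $L^{2}$-smallness in the lossy shell makes the interior behave as a small boundary reaction on $\partial D_{h/2}$ rather than as an active scatterer.

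\emph{Trace estimates and Green's representation.} Rescaling $D_h\mapsto D$ by $y=x/h$ and applying Korn's inequality together with standard elliptic/layer-potential machinery on the resulting $h$-independent geometry produces the needed bounds for $\tilde u|_{\partial D_h}$ and $T\tilde u|_{\partial D_h}$, with their $h$-dependence dictated by the change of variables. Let $G(x,y)$ denote the Neumann Green's tensor of $\mathcal L+\omega^{2}\mathrm{I}$ on $\Omega$; since $w=\tilde u-u_0$ solves $\mathcal Lw+\omega^{2}w=0$ in $\Omega\setminus\overline{D}_h$ with $Tw|_{\partial\Omega}=0$, Green's identity yields
\[
w(x)=\int_{\partial D_h}\Bigl(G(x,y)\,Tw(y)-\bigl(T_{y}G(x,y)\bigr)^{\!\top}w(y)\Bigr)\,ds(y),\quad x\in\Omega\setminus\overline{D}_h.
\]
As $D_h$ stays uniformly away from $\partial\Omega$, the kernels are smooth on the integration set; plugging in the trace bounds (subtracting $u_0$, whose Taylor expansion at the origin gains one further power of $h$) and using $|\partial D_h|=O(h^{N-1})$ produces the required $\|w\|_{H^{1/2}(\partial\Omega)^{N}}\le Ch^{N}\|\psi\|_{H^{-1/2}(\partial\Omega)^{N}}$.

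\emph{Main obstacle.} The technical crux is the content-independent uniform bound of the first step. Because the stiffness $\gamma h^{2+\delta}\mathcal C^{(0)}$ of the lossy shell degenerates as $h\to 0$ while the inner tensor $\tilde{\mathcal C}^{(a)}$ is an arbitrary regular tensor, no fixed G{\aa}rding constant controls the full sesquilinear form, and Korn's inequality on the shrinking annulus $D_h\setminus\overline{D}_{h/2}$ must be combined very carefully with the damping in order to effectively decouple the unknown interior from the exterior scattering. This is precisely what the sharp scaling $\tilde{\mathcal C}^{(2)}=\gamma h^{2+\delta}\mathcal C^{(0)}$, $\tilde\rho^{(2)}=\alpha+i\beta$ in \eqref{construction} is designed to achieve, and getting the dependence on $(\tilde{\mathcal C}^{(a)},\tilde\rho^{(a)})$ out of the final constant is the hardest part of the argument.
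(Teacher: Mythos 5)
Your high-level plan captures the right setting (pass to the virtual configuration, exploit the lossy shell, aim for small-inclusion asymptotics at rate $h^N$) and you correctly identify the energy identity giving $\beta\omega^{2}\|\tilde u\|^{2}_{L^{2}(D_h\setminus\overline{D}_{h/2})}\lesssim|\langle\psi,\tilde u-u_0\rangle|$ as the mechanism that defeats resonance. But there is a genuine gap in the core of the argument, and it is precisely at the point you flag as the ``main obstacle.''

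You propose to upgrade the $L^{2}$ estimate to a uniform-in-$h$, uniform-in-content bound $\|\tilde u\|_{H^{1}(\Omega\setminus\overline{D}_{h/2})}\le C\|\psi\|$ by a Fredholm/compactness contradiction argument, and only then feed that into a Green's representation. The paper never establishes (and does not need) such a global $H^{1}$ bound, and a compactness argument is very delicate here: the geometry shrinks with $h$, the shell stiffness $\gamma h^{2+\delta}\mathcal C^{(0)}$ degenerates, and the interior tensor is arbitrary regular, so there is no uniform G{\aa}rding constant for the full form. What the paper does instead is a bootstrap that sidesteps the global bound entirely. First (Lemma~\ref{Lem-1}) the energy estimate bounds $\|\tilde u\|_{L^{2}(D_h\setminus\overline{D}_{h/2})}$ in terms of $\|\tilde u-u_0\|_{H^{1/2}(\partial\Omega)}$ itself, not in terms of $\|\psi\|$ alone. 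Second (Lemma~\ref{Lem-2}), a duality argument controls the \emph{interior} traction $\Psi^-=T^-\tilde u(h\cdot)$ in the weak norm $H^{-3/2}(\partial D)^{N}$ using only that $L^{2}$ bound; this requires the nontrivial extension Lemma~\ref{Lem:extension} which produces, for arbitrary $\phi\in H^{3/2}(\partial D)^{N}$, a $w\in H^{2}(D)^{N}$ with $w|_{\partial D}=\phi$, $Tw|_{\partial D}=0$, and $w\equiv 0$ in $D_{1/2}$, so that Betti's formula confines everything to the lossy shell where the $L^{2}$ smallness lives. Third, the transmission relation $T^{+}\tilde u=\gamma h^{2+\delta}T^{-}\tilde u$ on $\partial D_h$ converts this into the crucial gain $\Psi^{+}=\gamma h^{2+\delta}\Psi^{-}$, which is where the sharp choice of shell stiffness enters. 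Fourth (Lemma~\ref{Lem-3} with the layer-potential Lemmas~\ref{Au-Le-1}--\ref{Au-Le-2}), a small-cavity exterior Neumann problem gives $\|\tilde u-u_0\|_{H^{1/2}(\partial\Omega)}\lesssim h^{N}\|\psi\|+h^{N-1}\|\Psi^{+}\|_{H^{-3/2}(\partial D)}$. Combining this with the $\|\tilde u-u_0\|^{1/2}$ dependence of $\|\Psi^{+}\|$ and applying Young's inequality closes the loop. Your outline is missing the duality step, the extension construction that makes it work, the use of the weak $H^{-3/2}$ norm (forced because you only control $\tilde u$ in $L^{2}$, not $H^{1}$, on the shell), the $h^{2+\delta}$ gain from the traction transmission, and the final Young/bootstrap closure. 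Replacing all of this by a single uniform a-priori bound is not just a different route — it is the step the authors specifically engineered around, and it is not clear it can be made to work as stated.
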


\subsection{Proof of Theorem \ref{Theorem-1}}

Before giving the proof of Theorem~\ref{Theorem-1}, we sketch the general structure of our argument. First, by using a variational argument together with the use of the lossy layer $\{D_h\backslash\overline{D}_{h/2}; \tilde{\mathcal{C}}^{(2)}, \tilde{\rho}^{(2)}\}$, one can control the energy of the elastic wave field in $D_h\backslash\overline{D}_{h/2}$. Next, by a duality argument, we control the trace of the traction of the elastic wave field on $\partial D_h$. In this step, we need derive a critical Sobolev extension result. Then, the study is reduced to estimating the boundary effect on $\partial \Omega$ due to a small elastic inclusion $D_h$ with a prescribed traction trace on $\partial D_h$. We shall make use of a variety of layer potential techniques in this step. Finally, the sharpness of our estimate has been numerically verified and shall be reported in a forthcoming work.

\begin{lem}\label{Lem-1}
The solutions to (\ref{Lame-2}) and (\ref{Lame-0}) satisfy the estimate
\ben
\beta\, \omega^2 \,||\tilde{u}||^2_{L^2(D_h\backslash\overline{D}_{h/2})}\leq C\,||\psi||_{H^{-1/2}(\partial \Omega)^N}||\tilde{u}-u_0||_{H^{1/2}(\partial \Omega)^N},
\enn where $C$ a is positive constant depending only on $\Omega$.
\end{lem}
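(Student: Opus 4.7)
The plan is to test the weak form of (\ref{Lame-2}) against $\tilde u$ itself and take imaginary parts, exploiting the fact that $\Im\tilde\rho$ equals $\beta$ on the lossy layer $D_h\setminus\overline{D}_{h/2}$, vanishes on $\Omega\setminus\overline{D}_h$, and is nonnegative inside $D_{h/2}$. The comparison with $u_0$ enters only at the final step, through the boundary integral on $\partial\Omega$.

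\textbf{Step 1: energy identity and its imaginary part.} I would apply Green's identity to (\ref{Lame-2}) with test function $\tilde u$ to obtain
\[
\int_\Omega (\tilde{\mathcal C}:\nabla\tilde u):\overline{\nabla\tilde u}\,dx - \omega^2\int_\Omega \tilde\rho\,|\tilde u|^2\,dx = \int_{\partial\Omega}\psi\cdot\overline{\tilde u}\,ds.
\]
In each sub-region of $\Omega$ the tensor $\tilde{\mathcal C}$ defined in (\ref{device-2}) is real-valued and, by Remark~\ref{remark-2.1}, inherits the major symmetry $\tilde C_{ijkl}=\tilde C_{klij}$ from $\mathcal C^{(0)}$ and $\mathcal C^{(a)}$ through the push-forward. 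A short index manipulation then shows $(\tilde{\mathcal C}:\nabla\tilde u):\overline{\nabla\tilde u}\in\R$, so taking imaginary parts kills the stiffness term and leaves
\[
\omega^2\int_\Omega \Im(\tilde\rho)\,|\tilde u|^2\,dx = -\,\Im\int_{\partial\Omega}\psi\cdot\overline{\tilde u}\,ds.
\]
Dropping the nonnegative contribution from $D_{h/2}$ gives
\[
\omega^2\beta\,\|\tilde u\|^2_{L^2(D_h\setminus\overline{D}_{h/2})} \le \Bigl|\Im\int_{\partial\Omega}\psi\cdot\overline{\tilde u}\,ds\Bigr|.
\]

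\textbf{Step 2: subtract the free-space contribution.} Testing the free Navier equation (\ref{Lame-0}) against $u_0$ itself and invoking Green's identity yields
\[
\int_{\partial\Omega}\psi\cdot\overline{u_0}\,ds = \int_\Omega (\mathcal C^{(0)}:\nabla u_0):\overline{\nabla u_0}\,dx - \omega^2\int_\Omega |u_0|^2\,dx,
\]
whose right-hand side is manifestly real, so $\Im\int_{\partial\Omega}\psi\cdot\overline{u_0}\,ds=0$. Subtracting this identity from Step~1 and invoking the $H^{-1/2}/H^{1/2}$ duality pairing on $\partial\Omega$, I conclude
\[
\omega^2\beta\,\|\tilde u\|^2_{L^2(D_h\setminus\overline{D}_{h/2})} \le \Bigl|\Im\int_{\partial\Omega}\psi\cdot\overline{(\tilde u-u_0)}\,ds\Bigr| \le C\,\|\psi\|_{H^{-1/2}(\partial\Omega)^N}\,\|\tilde u-u_0\|_{H^{1/2}(\partial\Omega)^N},
\]
with $C$ depending only on the trace/duality constants on $\partial\Omega$, hence only on $\Omega$.

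\textbf{Main obstacle.} I do not foresee any real difficulty: the statement is essentially an absorbing-energy identity tailored to the damping layer. The one delicate point is the cancellation of the imaginary part of the elastic-energy term, which requires the major symmetry of $\tilde{\mathcal C}$; this would fail if one worked with a complex-valued or non-major-symmetric transformed tensor. It is also worth noting that the argument is insensitive to $\tilde{\mathcal C}^{(a)}$, $\tilde\rho^{(a)}$, $\gamma$ and $\delta$, which matches the uniformity claimed in the statement and is precisely what makes this lemma a useful first building block toward Theorem~\ref{Theorem-1}.
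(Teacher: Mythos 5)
Your proposal is correct and follows essentially the same route as the paper: test the weak forms of (\ref{Lame-2}) and (\ref{Lame-0}) against their own solutions, take imaginary parts (the stiffness integral being real by major symmetry of the transformed tensor, a point the paper uses implicitly and you justify explicitly), drop the nonnegative contribution from $D_{h/2}$, and conclude by the $H^{-1/2}/H^{1/2}$ duality pairing on $\partial\Omega$. The only cosmetic difference is a sign convention in Green's identity and that you separate the identity $\Im\int_{\partial\Omega}\psi\cdot\overline{u_0}\,ds=0$ as its own step, whereas the paper subtracts the two identities in one go.
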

\begin{proof}
Multiplying $\overline{\tilde{u}}$ to (\ref{Lame-2}) and integrating by parts yield
\ben
-\int_{\Omega} (\tilde{\mathcal{C}}:\nabla \tilde{u}):\nabla \overline{\tilde{u}}\; dx+\omega^2\int_{\Omega}\tilde{\rho} |\tilde{u}|^2\, dx
&=&-\int_{\partial \Omega} [(\tilde{\mathcal{C}}:\nabla \tilde{u})\cdot \nu]\,\cdot \overline{\tilde{u}}\;ds\\
&=&-\int_{\partial \Omega} \psi\cdot \overline{\tilde{u}}\;ds.
\enn
Similarly,
\ben
-\int_{\Omega} (\mathcal{C}^{(0)}:\nabla u_0):\nabla \overline{u}_0\; dx+\omega^2\int_{\Omega}   |u_0|^2\, dx
=-\int_{\partial \Omega} \psi\cdot \overline{u}_0\;ds.
\enn
Taking the imaginary parts of the above two identities and making use of the definition of $\tilde{\rho}^{(2)}$ in (\ref{construction}), we arrive at
\be\label{eq:3}
\int_{D_{h/2}} \ima(\tilde{\rho}^{(a)}) |\tilde{u}|^2 dx+\beta \omega^2\int_{D_h\backslash D_{h/2}} |\tilde{u}|^2 dx=-\ima \int_{\partial \Omega}\psi\cdot (\overline{\tilde{u}}-\overline{u}_0)\,ds.
\en Since $\ima(\tilde{\rho}^{(a)})\geq0$, Lemma \ref{Lem-1} follows easily from (\ref{eq:3}).

The proof is complete.
\end{proof}

In what follows, we employ the notation $T^\pm \tilde{u}$ to denote the traction operators on $\partial D_h$ when limits are taken from outside and inside of $D_h$, respectively. For simplicity we write $\Psi^\pm(x)=T^\pm \tilde{u}(hx)$ for $x\in \partial D$.
\begin{lem}\label{Lem-2}
Let $\tilde{u}$ and $u_0$ be solutions to (\ref{Lame-2}) and (\ref{Lame-0}), respectively. We have the estimates
\ben
&& ||\Psi^-||^2_{H^{-3/2}(\partial D)^N}\\
&\leq& C\frac{(\gamma+\sqrt{\alpha^2+\beta^2}h^{-\delta}\omega^2)^2}{\beta \gamma^2\omega^2}h^{-N-2}\, ||\tilde{u}-u_0||_{H^{1/2}(\partial \Omega)^N}\,||\psi||_{H^{-1/2}(\partial \Omega)^N},\\
&& ||\Psi^+||^2_{H^{-3/2}(\partial D)^N} \\
&\leq & C\; \frac{(\gamma+\sqrt{\alpha^2+\beta^2}h^{-\delta}\omega^2)^2}{\beta\omega^2}  h^{2(1+\delta)-N}\, ||\tilde{u}-u_0||_{H^{1/2}(\partial \Omega)^N}\,||\psi||_{H^{-1/2}(\partial \Omega)^N},
\enn
where $C$ is a positive constant depending only on $D$ and $\Omega$ but independent of $h$ and $\psi$.
\end{lem}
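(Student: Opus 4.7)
The plan has three steps: rescale to a fixed annular domain, bound $\Psi^-$ via a duality argument driven by the lossy-layer PDE, and then recover $\Psi^+$ from the continuity of the physical traction across $\partial D_h$.

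\textbf{Step 1 (rescaling).} I will set $\hat u(x):=\tilde u(hx)$ on $D\setminus\overline D_{1/2}$. By the chain rule the lossy-layer Lam\'e system transforms to
\[
\gamma h^{\delta}\mathcal L\hat u+\omega^2(\alpha+i\beta)\hat u=0 \quad\text{in } D\setminus\overline D_{1/2},
\]
while $\Psi^-(x)=h^{-1}T\hat u(x)$ for $x\in\partial D$, where $T$ is the free-space traction evaluated as the limit from inside $D$. A change of variables in Lemma~\ref{Lem-1} will give
\[
\|\hat u\|_{L^2(D\setminus\overline D_{1/2})}^2\le\frac{C}{\beta\omega^2 h^N}\,\|\psi\|_{H^{-1/2}(\partial\Omega)^N}\,\|\tilde u-u_0\|_{H^{1/2}(\partial\Omega)^N}.
\]

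\textbf{Step 2 (duality).} I will estimate $\|T\hat u\|_{H^{-3/2}(\partial D)^N}$ by pairing with arbitrary $\varphi\in H^{3/2}(\partial D)^N$. The crucial ingredient is a ``critical Sobolev extension'' $\Phi\in H^2(D\setminus\overline D_{1/2})^N$ satisfying
\[
\Phi|_{\partial D}=\varphi,\quad T\Phi|_{\partial D}=0,\quad \Phi=T\Phi=0 \text{ on }\partial D_{1/2},\quad \|\Phi\|_{H^2}\le C\|\varphi\|_{H^{3/2}(\partial D)}.
\]
Such $\Phi$ can be constructed on the annulus of fixed positive width by solving $T\Phi=0$ algebraically for $\partial_\nu\Phi|_{\partial D}$, lifting the Cauchy data $(\varphi,\partial_\nu\Phi|_{\partial D})\in H^{3/2}\times H^{1/2}$ to an $H^2$ function in a collar of $\partial D$, and smoothly cutting off before $\partial D_{1/2}$. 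Then the symmetric Green identity for $\mathcal L$ on $D\setminus\overline D_{1/2}$ applied to $(\hat u,\Phi)$ annihilates all four boundary integrals except $\int_{\partial D}T\hat u\cdot\bar\varphi\,ds$, and substituting $\mathcal L\hat u=-\omega^2(\alpha+i\beta)\hat u/(\gamma h^{\delta})$ yields
\[
\langle T\hat u,\varphi\rangle_{\partial D}=-\int_{D\setminus\overline D_{1/2}}\hat u\cdot\overline{\Bigl[\mathcal L\Phi+\tfrac{\omega^2(\alpha-i\beta)}{\gamma h^{\delta}}\Phi\Bigr]}\,dx.
\]
Cauchy--Schwarz with the extension bound will then give
\[
\|T\hat u\|_{H^{-3/2}(\partial D)}\le C\,\|\hat u\|_{L^2}\,\frac{\gamma+\sqrt{\alpha^2+\beta^2}\,\omega^2 h^{-\delta}}{\gamma},
\]
and squaring, inserting the Step 1 bound, and using $\|\Psi^-\|^2=h^{-2}\|T\hat u\|^2$ produces the first asserted estimate.

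\textbf{Step 3 (interface continuity).} Physical traction continuity across $\partial D_h$ together with the inner stiffness $\tilde{\mathcal C}^{(2)}=\gamma h^{2+\delta}\mathcal C^{(0)}$ immediately gives $\Psi^+=\gamma h^{2+\delta}\Psi^-$. Squaring multiplies the $\Psi^-$ bound by $\gamma^2 h^{4+2\delta}$, cancelling the $\gamma^{-2}$ in the prefactor and converting $h^{-N-2}$ into $h^{2(1+\delta)-N}$, which is precisely the second asserted inequality. The hard part of the whole argument will be the construction of $\Phi$ in Step 2 with the four prescribed vanishing traces and the sharp $H^{3/2}\to H^2$ norm control; once that is in hand the rest is computation driven by Lemma~\ref{Lem-1}.
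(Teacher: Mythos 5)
Your proposal is correct and follows essentially the same route as the paper's proof: rescale $\hat u(x)=\tilde u(hx)$, use Lemma~\ref{Lem-1} for the $L^2$ bound, pair $T\hat u$ against $H^{3/2}$ test functions via the special $H^2$-extension (the paper's Lemma~\ref{Lem:extension}, which constructs it on all of $D$ with $w\equiv 0$ on $D_{1/2}$ — equivalent to your annulus version), apply Betti's identity and Cauchy--Schwarz, and finally obtain $\Psi^+=\gamma h^{2+\delta}\Psi^-$ from continuity of the physical traction. The intermediate constants and exponents you track all match the paper's.
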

\begin{proof} By the definition of the norm $||\cdot||_{H^{-3/2}(\partial D)}$,
\ben
||\Psi||_{H^{-3/2}(\partial D)^N}=\sup_{||\phi||_{H^{3/2}(\partial D)^N}=1}\big| \int_{\partial D} \Psi(x)\cdot\phi(x)\,ds\big|.
\enn
For any $\phi\in H^{3/2}(\partial D)^N$, there exists  $w\in H^2(D)^N$ such that (see Lemma \ref{Lem:extension})

(i) $w=\phi$ on $\partial D$ and $T w=0$ on $\partial D$;

(ii) $||w||_{H^2(D)^N}\leq C\,||\phi||_{H^{3/2}(\partial D)^N}$;

(iii) $w=0$ in $D_{1/2}$.

Then we have
\be\label{eq:4}
\int_{\partial D} \Psi^-(x)\cdot \phi(x)\,ds=\int_{\partial D} T^-\tilde{u}(hx)\cdot \phi(x)\,ds=\int_{\partial D} T^-\tilde{u}(hx)\cdot w(x)\,ds.
\en For $y\in D_h$, write $x=y/h \in D$. Set $v(x):=\tilde{u}(hx)=\tilde{u}(y)$ for $x\in D$.
By the definitions of $\tilde{\mathcal{C}}^{(2)}$ and $\tilde{\rho}^{(2)}$ (see (\ref{construction})), we know
\be\label{eq:7}
\gamma\, h^{2+\delta}\,\mathcal{L} \tilde{u}(y)+ \omega^2 (\alpha+i\beta)\tilde{u}(y)=0\quad \mbox{in}\quad D_h\backslash\overline{D}_{h/2}.
\en
Direct calculations show that
\be\label{eq:5}
\gamma\, h^{\delta}\,\mathcal{L} v(x)+ \omega^2 (\alpha+i\beta)v(x)=0\quad \mbox{in}\quad D\backslash\overline{D}_{1/2}.
\en
Using the fact $Tw=0$ on $\partial D$, it is seen from (\ref{eq:4}) that
\ben
\int_{\partial D} \Psi^-(x)\cdot \phi(x)\,ds&=&h^{-1}\int_{\partial D} T^-v(x)\cdot w(x)\,ds\\
&=&
h^{-1}\int_{\partial D} T^-v(x)\cdot w(x)-v(x)\cdot T^-w(x) \,ds\\
&=&
h^{-1}\int_{D\backslash D_{1/2}} \mathcal{L}v\cdot w- \mathcal{L}w\cdot v\,dx
\enn where the third equality follows from Betti's formula and the fact that $w=0$ in $D_{1/2}$.
Recalling (\ref{eq:5}) and applying the Cauchy-Schwarz inequality yield
\be\no
\big|\int_{\partial D} \Psi^-(x)\cdot \phi(x)\,ds\big| &\leq &h^{-1-\delta}\, \sqrt{\alpha^2+\beta^2}\,\omega^2 \gamma^{-1}\, ||v||_{L^2(D\backslash\overline{D}_{1/2})^N} \,||w||_{L^2(D)^N}\\ \label{eq:6}
&&+h^{-1} ||v||_{L^2(D\backslash\overline{D}_{1/2})^N} \,||\mathcal{L} w||_{L^2(D)^N}.
\en
In view of the relations
\ben
&||v||_{L^2(D\backslash\overline{D}_{1/2})^N}=||\tilde{u}(h\,\cdot\,)||_{L^2(D\backslash\overline{D}_{1/2})^N}
=h^{-N/2}||\tilde{u}||_{L^2(D_h\backslash\overline{D}_{h/2})^N},\\
& ||\mathcal{L} w||_{L^2(D)^N}+ || w||_{L^2(D)^N}\leq C \,||\phi||_{H^{3/2}(\partial D)^N},
\enn we derive from (\ref{eq:6}) that
\ben
&&\big|\int_{\partial D} \Psi^-(x)\cdot \phi(x)\,ds\big|\\ &\leq&  C\,h^{-N/2-1}\left(1+ \sqrt{\alpha^2+\beta^2}\,\omega^2 \gamma^{-1}\,h^{-\delta} \right)\, ||\tilde{u}||_{L^2(D_h\backslash\overline{D}_{h/2})^N}\,||\phi||_{H^{3/2}(\partial D)^N}.
\enn
This implies that
\be\label{eq:espsi}
||\Psi^-||_{H^{-3/2}(\partial D)^N}&\leq&  C\,h^{-N/2-1}\left(1+ \sqrt{\alpha^2+\beta^2}\,\omega^2 \gamma^{-1}\,h^{-\delta} \right)\, ||\tilde{u}||_{L^2(D_h\backslash\overline{D}_{h/2})^N},
\en
which together with Lemma \ref{Lem-1} leads to the first assertion of Lemma \ref{Lem-2}.
By (\ref{eq:7}) and the transmission conditions on $\partial D_h$, we have
\ben
T^+\tilde{u}=\gamma\,h^{2+\delta}T^-\tilde{u}\quad\mbox{on}\quad \partial D_h.
\enn Hence, $\Psi^+=\gamma\,h^{2+\delta}\Psi^-$ on $\partial D$. Combining this with the estimate of $||\Psi^-||_{H^{-3/2}(\partial D)^N}$ in \eqref{eq:espsi} proves the second assertion of Lemma \ref{Lem-2}.

The proof is complete.
\end{proof}

We derive the following Sobolev extension result which has been used in the proof of Lemma \ref{Lem-2}.

\begin{lem}\label{Lem:extension}
For any $\phi\in H^{3/2}(\partial D)^N$, there exists  $w\in H^2(D)^N$ such that
\begin{itemize}
\item[(i)] $w=\phi$ on $\partial D$ and $T w=0$ on $\partial D$;
\item[(ii)] $||w||_{H^2(D)^N}\leq C\,||\phi||_{H^{3/2}(\partial D)^N}$;
\item[(iii)] $w=0$ in $D_{1/2}$.
\end{itemize}
\end{lem}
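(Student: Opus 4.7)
The plan is to construct $w$ in two stages: first, build an $H^2(D)^N$ extension $\tilde{w}$ of $\phi$ with vanishing traction on $\partial D$; then multiply by a smooth cutoff so as to kill it on $D_{1/2}$ without disturbing the boundary data.

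For the first stage I invoke the standard inverse trace theorem for $H^2$: the map $u\mapsto(u|_{\partial D},\partial_\nu u|_{\partial D})$ from $H^2(D)^N$ onto $H^{3/2}(\partial D)^N\times H^{1/2}(\partial D)^N$ is surjective and admits a bounded right inverse (see, e.g., Lions--Magenes). What is needed is a choice of Neumann trace $\eta$ that forces $T\tilde{w}=0$ on $\partial D$. Arguing exactly as in the proof of Lemma~\ref{Lem:stress}, the traction $Tw$ on $\partial D$ splits into normal and tangential pieces $Tw=A(\lambda,\mu,\nu)\,\partial_\nu w+B(\lambda,\mu,\nu)\,\partial_\tau w$, with an analogous decomposition in three dimensions obtained by applying a local orthonormal tangent frame on $\partial D$ to \eqref{stress-3D}; in either case the coefficient of $\partial_\nu w$ is everywhere invertible because $\det A=\mu(\lambda+2\mu)>0$. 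Setting $\eta:=-A^{-1}B\,\partial_\tau\phi$, which lies in $H^{1/2}(\partial D)^N$ with $\|\eta\|_{H^{1/2}(\partial D)^N}\leq C\|\phi\|_{H^{3/2}(\partial D)^N}$ by smoothness of $\partial D$ and boundedness of tangential differentiation $H^{3/2}\to H^{1/2}$, the inverse trace theorem produces $\tilde{w}\in H^2(D)^N$ with $\tilde{w}|_{\partial D}=\phi$, $\partial_\nu\tilde{w}|_{\partial D}=\eta$, so that $T\tilde{w}=0$ on $\partial D$, and $\|\tilde{w}\|_{H^2(D)^N}\leq C\|\phi\|_{H^{3/2}(\partial D)^N}$.

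For the second stage, since $D_{1/2}\Subset D$ I pick a cutoff $\chi\in C^\infty(\overline{D})$ with $\chi\equiv 1$ on some neighborhood $U$ of $\partial D$ and $\chi\equiv 0$ on a neighborhood of $\overline{D}_{1/2}$, and set $w:=\chi\tilde{w}$. Because $\nabla\chi\equiv 0$ on the neighborhood $U$ of $\partial D$, the Leibniz rule yields $w|_{\partial D}=\tilde{w}|_{\partial D}=\phi$ and $\nabla w|_{\partial D}=\nabla\tilde{w}|_{\partial D}$, hence $Tw|_{\partial D}=T\tilde{w}|_{\partial D}=0$; by construction $w\equiv 0$ on $D_{1/2}$; and the $H^2$ bound in (ii) is preserved because multiplication by the fixed $C^\infty(\overline{D})$ function $\chi$ is a bounded operator on $H^2(D)^N$.

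The only nontrivial point is the algebraic decomposition step: one must verify in both $N=2$ and $N=3$ that $T$ splits into an invertible normal-derivative piece plus a first-order tangential piece, so that the condition $T\tilde{w}=0$ can be rephrased as a prescription of $\partial_\nu\tilde{w}$ in $H^{1/2}(\partial D)^N$ that is compatible with the inverse trace theorem. In 2D this is exactly the content of Lemma~\ref{Lem:stress}; in 3D the same manipulation applied to \eqref{stress-3D}, with $\nu\times\mathrm{curl}\,w$ and $\mathrm{div}\,w$ expanded in a local orthonormal frame on $\partial D$, produces an analogous invertible matrix coefficient for $\partial_\nu w$. Once this is in place the rest of the argument is routine, and since every step is linear and bounded in $\phi$ (independently of $\phi$), the constant $C$ in (ii) is uniform and depends only on $D$, $D_{1/2}$ and the Lam\'e constants $\lambda,\mu$.
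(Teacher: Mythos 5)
Your proof is correct and takes essentially the same route as the paper's: the key step in both is to split the traction as $Tw = A(\lambda,\mu,\nu)\,\partial_\nu w + (\text{tangential part})$ with $\det A = \mu(\lambda+2\mu) > 0$ (in 2D; $\mu^2(\lambda+2\mu)$ in 3D), and then to prescribe the Neumann trace as $\eta = -A^{-1}B\,\partial_\tau\phi$ (which equals the paper's choice $\psi = -A^{-1}B\,\partial_\tau w$ on $\partial D$ since $w|_{\partial D}=\phi$) so the traction vanishes. The only cosmetic difference is in enforcing condition (iii): the paper invokes Wloka's inverse-trace theorem, which directly produces an extension vanishing in $D_{1/2}$, whereas you use a generic inverse trace theorem and then multiply by a cutoff $\chi$; both are equally valid, and your explicit 3D comment matches the paper's computation via the surface-gradient decomposition $\grad\varphi = \Grad\varphi + (\partial_\nu\varphi)\,\nu$ in \eqref{eq:Grad}.
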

\begin{proof}
For  $\psi\in H^{1/2}(\partial D)^N$, one can clearly find $w_1\in H^2(D)^N$ such that
\ben
w_1=0\quad \mbox{in}\; D_{1/2}, \quad \partial_{\nu}w_1=\psi\quad\mbox{on}\; \partial D,\quad ||w_1||_{H^2(D)^N}\leq C\,||\psi||_{H^{1/2}(\partial D)^N}.
\enn
By \cite[Theorem 14.1]{Wloka}, there exists $w_2\in H^2(D)^N$ such that
\begin{itemize}
\item[(i)] $w_2=\phi-w_1$ on $\partial D$ and $\partial_{\nu}w_2=0$ on $\partial D$;
\item[(ii)] $||w_2||_{H^2(D)^N}\leq C\,||\phi-w_1||_{H^{3/2}(\partial D)^N}$;
\item[(iii)] $w_2=0$ in $D_{1/2}$.
\end{itemize}
Hence, the sum $w:=w_1+w_2$ satisfies
\begin{itemize}
\item[(a)] $w=\phi$ on $\partial D$ and $\partial_{\nu}w=\psi$ on $\partial D$;
\item[(b)] $||w||_{H^2(D)^N}\leq C (||\phi||_{H^{3/2}(\partial D)^N}+ ||\psi||_{H^{1/2}(\partial D)^N})$;
\item[(c)] $w=0$ in $D_{1/2}$.
\end{itemize}
In order to conclude the proof of the lemma, it is sufficient to determine a $\psi=\psi(\phi)\in H^{1/2}(\partial D)^N$ such that
\be\label{eq:Tw}
T w=0\quad\mbox{on}\quad \partial D\quad\mbox{and}\quad
||\psi||_{H^{1/2}(\partial D)^N}\leq C\,||\phi||_{H^{3/2}(\partial D)^N}.
\en
In two dimensions, we recall from (\ref{eq:AB}) that the stress operator can be decomposed into
\ben
Tw=A(\lambda,\mu,\nu)\,\partial_{\nu} w + B(\lambda,\mu,\nu)\,\partial_{\tau} w\quad \mbox{on}\quad \partial D.
\enn
In particular, the matrix $A$ is invertible,  $B$ is bounded and the tangential derivative $\partial_{\tau}w\in H^{1/2}(\partial D)^N$  is uniquely determined by $w=\phi$ on $\partial D$. Hence,
choosing $\psi:=-A^{-1}B \partial_\tau w\in H^{1/2}(\partial D)^N$, we see that the relations in (\ref{eq:Tw}) are both fulfilled.

We next consider the 3D case and we need the following identity
\be\label{eq:Grad}
\grad \varphi=\Grad \varphi+\frac{\partial \varphi}{\partial \nu}\,\nu,\quad \nu=(\nu_1,\nu_2,\nu_3),
\en
where $\Grad(\cdot) $ denotes the surface gradient of a scalar function on $\partial D$. Write $w=(w_1,w_2,w_3)$ and $\Grad w_i=([\Grad w_j]^1, [\Grad w_j]^2, [\Grad w_j]^3)$. By (\ref{eq:Grad}) and the definition (\ref{stress-3D}), one can represent the three dimensional stress operator as
\ben
Tw=A(\lambda,\mu,\nu)\,\partial_{\nu} w + \textbf{B}(\lambda,\mu,\nu, \Grad w),
\enn
where
\ben
\textbf{B}&:=&\lambda\nu\;\left([\Grad w_1]^1+[\Grad w_2]^2+[\Grad w_3]^3\right)\\  &&\!\!\!\!+ \mu\, \nu\times \left(
[\Grad w_3]^2-[\Grad w_2]^3, [\Grad w_1]^3-[\Grad w_3]^1, [\Grad w_2]^1-[\Grad w_1]^2\right),\\
A&:=&2\mu +\lambda \nu (\nu\;\cdot\; )+\mu \nu\times(\nu\times\; ) \\
&&=\begin{pmatrix}
\mu+(\lambda+\mu)\nu_1^2 & (\lambda+\mu)\nu_1\nu_2 & (\lambda+\mu)\nu_1\nu_3\\
(\lambda+\mu)\nu_1\nu_2  & \mu+(\lambda+\mu)\nu_2^2 & (\lambda+\mu)\nu_2\nu_3\\
(\lambda+\mu)\nu_1\nu_3 & (\lambda+\mu)\nu_2\nu_3 & \mu+(\lambda+\mu)\nu_3^2
 \end{pmatrix}.
\enn
It is straightforward to check that
\ben
||\textbf{B}||_{H^{1/2}(\partial D)^N}\leq C\,||w||_{H^{1/2}(\partial D)^N}= C\,||\phi||_{H^{1/2}(\partial D)^N},
\enn since only the surface gradients are involved in $\textbf{B}$. On the other hand, we have
$\mbox{det}(A)=\mu^2(\lambda+2\mu)>0$. Hence, as done in the 2D case, one can take $\psi:=-A^{-1}\textbf{B} \in H^{1/2}(\partial D)^N$.
This verifies (\ref{eq:Tw}) in three dimensions and completes the proof.
\end{proof}

\begin{lem}\label{Lem-3}
Assume that $-\omega^2  $ is not an eigenvalue of the elliptic operator $\mathcal{L}$ on $\Omega$ with the traction-free boundary condition.  Let $u_0 \in H^1(\Omega)^N$ be the solution of (\ref{Lame-0}) with $\psi\in H^{-1/2}(\partial \Omega)^N$. For $h>0$ and $\varphi\in H^{-1/2}(\partial D_{h})^N$, consider the elliptic boundary value problem
\ben \left\{\begin{array}{lll}
\mathcal{L}v+\omega^2   v=0&&\mbox{in}\quad \Omega\backslash\overline{D}_{h},\\
 T v=\varphi&&\mbox{on}\quad \partial D_{h},\\
 Tv=\psi&&\mbox{on}\quad \partial \Omega.\end{array}\right.
\enn
Then there exists $h_0\in\mathbb{R}_+$ such that when $h<h_0$,
\be\label{eq:10}
||v-u_0||_{H^{1/2}(\partial \Omega)^N}\leq C\,\left(h^N\,||\psi||_{H^{-1/2}(\partial \Omega)^N}  + h^{N-1}\,||\varphi(h\,\cdot\,)||_{H^{-3/2}(\partial D)^N} \right),
\en where $C$ is a positive constant independent of $h$, $\varphi$ and $\psi$.
\end{lem}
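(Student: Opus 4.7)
The plan is to represent $w := v - u_0$ on $\partial\Omega$ as a boundary integral over the small surface $\partial D_h$, using the Neumann Green's tensor $G(x,y)$ for $\mathcal{L}+\omega^2$ in $\Omega$ subject to the traction-free boundary condition $T_x G(x,y)=0$ on $\partial\Omega$; such a Green's tensor is well defined thanks to the non-resonance hypothesis on $\omega$. The difference $w$ satisfies $(\mathcal{L}+\omega^2)w=0$ in $\Omega\backslash\overline{D}_h$, with $Tw=0$ on $\partial\Omega$ and $T^+w=\varphi-Tu_0|_{\partial D_h}$ on $\partial D_h$, where $T^+$ denotes the traction with the outward normal to $D_h$. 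Betti's formula applied in $\Omega\backslash\overline{D}_h$ with kernel $G$ then gives, for every $x\in\partial\Omega$,
\begin{equation*}
w(x) = -\int_{\partial D_h} G(x,y)\,\varphi(y)\,ds(y) + \int_{\partial D_h} G(x,y)\,Tu_0(y)\,ds(y) + \int_{\partial D_h}\bigl(T^+_y G(x,y)\bigr)^\top w(y)\,ds(y).
\end{equation*}

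I would estimate the three terms separately. For the first, rescaling $y=hz$ and applying $H^{-3/2}$--$H^{3/2}$ duality on $\partial D$, the smoothness of $G(x,\cdot)$ near the origin (for $x\in\partial\Omega$) gives $\|G(x,\cdot)\|_{H^{3/2}(\partial D_h)^N}\leq C\,h^{(N-4)/2}$ and $\|\varphi\|_{H^{-3/2}(\partial D_h)^N}\leq C\,h^{(N+2)/2}\|\varphi(h\,\cdot\,)\|_{H^{-3/2}(\partial D)^N}$, whose product is $\leq C\,h^{N-1}\|\varphi(h\,\cdot\,)\|_{H^{-3/2}(\partial D)^N}$. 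For the second, interior elliptic regularity yields $\|u_0\|_{C^k(\overline{D})}\leq C_k\|\psi\|_{H^{-1/2}(\partial\Omega)^N}$ for all $k$. Writing $Tu_0(y)=\nu_D(y)\,\sigma(u_0)(0)+O(h)$ on $\partial D_h$ and converting the leading surface integral into a volume one via the divergence identity $\int_{\partial D_h}\nu_D(y)_j f(y)\,ds(y)=\int_{D_h}\partial_{y_j}f(y)\,dy=O(h^N)$, applied with $f=G(x,\cdot)$ which is smooth throughout $D_h$ for $x\in\partial\Omega$, produces a bound $\leq C\,h^N\|\psi\|_{H^{-1/2}(\partial\Omega)^N}$.

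The main obstacle is the third, double-layer-type integral, which requires a uniform-in-$h$ bound on the trace of $w$ on $\partial D_h$. My plan is to take the limit $x\to\partial D_h$ from outside in the Betti representation and invoke the single- and double-layer jump relations for the elastic system, thereby recasting the representation as a boundary integral equation for $w|_{\partial D_h}$; after rescaling $y=hz$ this becomes an equation on the fixed surface $\partial D$ whose leading symbol (as $h\to 0$) is the classical static Lam\'e Neumann--Poincar\'e operator on $\partial D$, with the frequency-dependent correction carrying an $O(h^2)$ factor. Fredholm theory for the Lam\'e layer operators combined with a Neumann-series perturbation argument then yields uniform invertibility of the rescaled operator for small $h$, together with an estimate of $\|w(h\,\cdot\,)\|_{H^{1/2}(\partial D)^N}$ in terms of the rescaled data $\varphi(h\,\cdot\,)$ and $Tu_0(h\,\cdot\,)$; substituted back, this bounds the third integral by $C\bigl(h^N\|\psi\|_{H^{-1/2}(\partial\Omega)^N}+h^{N-1}\|\varphi(h\,\cdot\,)\|_{H^{-3/2}(\partial D)^N}\bigr)$.

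Summing the three contributions yields the pointwise bound $|w(x)|\leq C\bigl(h^N\|\psi\|_{H^{-1/2}}+h^{N-1}\|\varphi(h\,\cdot\,)\|_{H^{-3/2}}\bigr)$ for every $x\in\partial\Omega$. Differentiating the Betti representation tangentially in $x$ is legitimate since $G(x,\cdot)$ and $T^+_y G(x,\cdot)$ remain smooth for $x\in\partial\Omega$ and $y$ near $0$; repeating the same estimates gives a matching bound for the tangential gradient of $w$ on $\partial\Omega$, and the $H^{1/2}(\partial\Omega)^N$ estimate \eqref{eq:10} follows from the Sobolev inclusion $W^{1,\infty}(\partial\Omega)\hookrightarrow H^{1/2}(\partial\Omega)$. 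The principal technical difficulty is ensuring that all implicit constants---in particular the operator norm of the inverse of the rescaled boundary integral operator and the Sobolev trace constants associated with $\partial D_h$---remain bounded as $h\to 0^+$.
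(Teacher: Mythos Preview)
Your approach is correct in outline and constitutes a genuine alternative to the paper's argument. The paper does \emph{not} use the Neumann Green's tensor for $\Omega$. Instead it splits $w=v-u_0$ linearly as $w=w_1+w_2$, where $w_2$ carries the inner datum $Tu_0|_{\partial D_h}$ and $w_1$ carries $\varphi$. The piece $w_2$ is handled by an auxiliary lemma (Lemma~\ref{Au-Le-1}) that uses the free-space fundamental tensor $\Pi^{(\omega)}$ and a \emph{coupled} system of boundary integral equations on $\partial\Omega\cup\partial D_h$; the piece $w_1$ is compared with the radiating solution $W$ of the exterior scattering problem in $\mathbb{R}^N\setminus\overline{D}_h$ (Lemma~\ref{Au-Le-2}), and the residual $P=w_1-W$ is treated like $w_2$. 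Your use of the Neumann Green's tensor absorbs the $\partial\Omega$ boundary condition into the kernel, so you only ever integrate over $\partial D_h$ and never need the auxiliary exterior problem; this is a tidy simplification, at the cost of working with a less explicit kernel $G=\Pi^{(\omega)}+R$ whose regular part $R$ depends on $\Omega$. Both routes ultimately rest on the same mechanism: after rescaling, the integral equation on $\partial D$ is a small perturbation of the static Lam\'e Neumann--Poincar\'e equation $\bigl(\tfrac{1}{2}I-K^{(0)}_{\partial D}\bigr)\phi=\cdots$, which is invertible.

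Two small technical points to watch. First, the frequency-dependent correction to the rescaled double-layer operator is $O(h^{N-1})$, not uniformly $O(h^2)$; in two dimensions the remainder is only $O(h)$ (with a possible $\ln h$), as the paper's estimates (cf.\ \eqref{eq:37}, \eqref{eq:32}) reflect. Second, since the datum $\varphi(h\,\cdot)$ is only assumed in $H^{-3/2}(\partial D)^N$, the natural bound you obtain from the rescaled integral equation is on $\|w(h\,\cdot)\|_{H^{-1/2}(\partial D)^N}$ (one order up through the single layer), not $H^{1/2}$. This weaker control is exactly what you need: for $x\in\partial\Omega$ the double-layer kernel $T^+_y G(x,\cdot)$ is smooth on $\partial D_h$, so after rescaling you may pair $w(h\,\cdot)\in H^{-1/2}$ with the smooth test function and still recover the factor $h^{N-1}$. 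The paper's Lemma~\ref{Au-Le-2} makes precisely this move in the $H^{-1/2}$ framework.
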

\begin{proof}
Set $w=v-u_0$ in $\Omega\backslash\overline{D}_h$. Then $w\in H^1(\Omega\backslash\overline{D}_h)^N$ satisfies
\ben \left\{\begin{array}{lll}
\mathcal{L}w+\omega^2   w=0&&\mbox{in}\quad \Omega\backslash\overline{D}_{h},\\
 T w=\varphi-T u_0&&\mbox{on}\quad \partial D_{h},\\
 Tw=0&&\mbox{on}\quad \partial \Omega.\end{array}\right.
\enn
Let $w_1\in H^1(\Omega\backslash\overline{D}_h)^N$ be the unique solution of
\ben \left\{\begin{array}{lll}
\mathcal{L}w_1+\omega^2   w_1=0&&\mbox{in}\quad \Omega\backslash\overline{D}_{h},\\
 T w_1=\varphi&&\mbox{on}\quad \partial D_{h},\\
 Tw_1=0&&\mbox{on}\quad \partial \Omega.\end{array}\right.
\enn
Then $w_2:=w-w_1\in H^1(\Omega\backslash\overline{D}_h)^N$ satisfies
\ben \left\{\begin{array}{lll}
\mathcal{L}w_2+\omega^2   w_2=0&&\mbox{in}\quad \Omega\backslash\overline{D}_{h},\\
 T w_2=T u_0&&\mbox{on}\quad \partial D_{h},\\
 Tw_2=0&&\mbox{on}\quad \partial \Omega.\end{array}\right.
\enn
By Lemma \ref{Au-Le-1} in Section~\ref{sec:small}, we know
\ben
||w_2||_{H^{1/2}(\partial \Omega)^N}\leq C\,h^N\,||\psi||_{H^{-1/2}(\partial \Omega)^N}.
\enn
Hence, in order to prove (\ref{eq:10}) we only need to verify
\be\label{eq:13}
||w_1||_{H^{1/2}(\partial \Omega)^N}\leq C\,h^{N-1}\,||\varphi(h\,\cdot\,)||_{H^{-3/2}(\partial D)^N}.
\en
To that end, we consider the following elastic scattering problem in an unbounded domain:
 \ben
\mathcal{L}\, W+\omega^2   W=0\quad\mbox{in}\quad \R^N\backslash\overline{D}_{h},\quad
 T\, W=\varphi\quad\mbox{on}\quad \partial D_{h},
\enn
where $W\in H^1_{loc}(\R^N\backslash\overline{D}_h)$ is additionally required to satisfy the Kupradze radiation condition (see (\ref{Rads}) below) when $|x|\rightarrow \infty$.
We shall show in Section \ref{sec:small} that (see Lemma \ref{Au-Le-2})
\be\label{eq:12}
||W||_{H^{1/2}(\partial \Omega)^N}+||T W|_{\partial \Omega}||_{C(\partial \Omega)}\leq C\,h^{N-1}\,||\varphi(h\,\cdot\,)||_{H^{-3/2}(\partial D)^N}.
\en
Obviously, the difference $P:=w_1-W\in H^1(\Omega\backslash\overline{D}_h)$
 is the unique solution to
\ben \left\{\begin{array}{lll}
\mathcal{L} P+\omega^2   P=0&&\mbox{in}\quad \Omega\backslash\overline{D}_{h},\\
 T P=0&&\mbox{on}\quad \partial D_{h},\\
 T P=-TW&&\mbox{on}\quad \partial \Omega.\end{array}\right.
\enn Making use of layer potential techniques, one can show that
\be\label{eq:11}
||P||_{H^{1/2}(\partial \Omega)^N}\leq C\,||T W|_{\partial \Omega}||_{C(\partial \Omega)}.
\en
(\ref{eq:11}) can be proved in a completely similar manner to that of Lemma \ref{Au-Le-1} in what follows.  Finally, combining (\ref{eq:12}) and (\ref{eq:11}) yields the estimate (\ref{eq:13}), which completes the proof.
\end{proof}

\begin{proof}[Proof of Theorem \ref{Theorem-1}]
 We set $\varphi=T^+\tilde{u}|_{\partial D_h}$ in Lemma \ref{Lem-2}, so that $v=\tilde{u}$ and $\Psi^+=\varphi(h\,\cdot\,)$. By Lemma \ref{Lem-2}, it holds that
\be\label{eq:8}
||\tilde{u}-u_0||_{H^{1/2}(\partial \Omega)^N}\leq C_1\,\left(h^N\,||\psi||_{H^{-1/2}(\partial \Omega)^N}  + h^{N-1}\,||\Psi^+||_{H^{-3/2}(\partial D)^N} \right).
\en
Recalling from the second assertion of Lemma \ref{Lem-2} that, for sufficiently small $h$,
\be\label{eq:9}
||\Psi^+||_{H^{-3/2}(\partial D)^N}\leq C_2\, h^{(2-N)/2}\,||\tilde{u}-u_0||^{1/2}_{H^{1/2}(\partial \Omega)^N}\,||\psi||^{1/2}_{H^{-1/2}(\partial \Omega)^N}.
\en
Combining (\ref{eq:8}) and (\ref{eq:9}) and applying Young's inequality yield the desired estimate in \eqref{estimate-1}.

The proof is complete.
\end{proof}

\subsection{Estimates on small inclusions}\label{sec:small}
\subsubsection{Layer potentials for the Lam\'e system}

We first recall
the fundamental solution $\Pi(x,y)$ (Green's tensor) to the Navier equation (\ref{Lame-0}) in $\R^N$. Let $G_k(x,y)$ denote the free-space fundamental solution to the scalar Helmholtz equation $(\Delta+k^2)u=0$ in $\R^N$. In three dimensions, it takes the form
\ben
G_k(x,y)=\frac{\exp (ik |x-y|)}{4\pi\,|x-y|},\quad x\neq y,\quad x,y\in \R^3,
\enn
while in two dimensions,
\ben
G_k(x,y)=\frac{i}{4}\,H_0^{(1)}(k|x-y|),\quad x\neq y,\quad x,y\in \R^2,
\enn
where $H_0^{(1)}(\cdot)$ is the Hankel function of the first kind of order zero.
Then the Green's tensor $\Pi(x,y)$ for the Lam\'e system can be represented as
\be\label{Pi}
\Pi^{(\omega)}(x,y)=\frac{1}{\mu}\, G_{k_s}(x,y)\,\textbf{I}+\frac{1}{ \omega^2  }\, \grad_x\,\grad_x^\top\;\left[ G_{k_s}(x,y)-G_{k_p}(x,y) \right],
\en for $x, y\in \R^N$, $x\neq y$, where the compressional and shear wave numbers $k_p$ and $k_s$ are given respectively in (\ref{kp}) and (\ref{ks}), and
$\textbf{I}$ stands for the $N\times N$ identity matrix.

Let $Q$ be a bounded simply connected domain in $\mathbb{R}^N$ with the smooth boundary $\partial Q$.  In our subsequent applications, $Q=D_h$ or $Q=\Omega$.
For surface densities $\varphi(x)$ with $x\in \partial Q$,
define the single and double layer potential operators for the Navier equation by
\be\label{SL}
(SL_Q\varphi)(x):=\int_{\partial Q} \Pi(x,y)\varphi(y)ds(y),\quad  x\in \R^N\backslash\partial Q,\;\\ \label{DL}
(DL_{Q}\varphi)(x):=\int_{\partial Q}  \Xi(x,y)\varphi(y)ds(y),\quad\ x\in \R^N\backslash \partial Q,
\en where $\Xi(x,y)$ is a matrix function whose $l$-th column vector is defined as
\ben
 \left[\Xi(x,y)\right]^\top\, \textbf{e}_l:=T_{\nu(y)}\left[\Pi(x,y)\textbf{e}_l\right]=\nu(y)\cdot\left[ \sigma(\Pi(x,y)\,\textbf{e}_l)\right]\quad\mbox{on}\quad\partial Q,
\enn
for $x\neq y,l=1,2,N$.
Here, $\textbf{e}_l$, $1\leq l\leq N$ are the standard Euclidean base vectors in $\mathbb{R}^N$, and $T_{\nu(y)}$ is the stress operator defined in (\ref{stress-2D}) and (\ref{stress-3D}).
We also let
\be\label{S}
(S_Q\varphi)(x):=\int_{\partial Q} \Pi(x,y)\varphi(y)ds(y),\quad  x\in \partial Q,\;\\ \label{K}
(K_{Q}\varphi)(x):=\int_{\partial Q}  \Xi(x,y)\varphi(y)ds(y),\quad\ x\in \partial Q.
\en
Using Taylor series expansion for exponential functions, one can rewrite the matrix $\Pi^{(\omega)}(x,y)$ in 3D as the series (see, e.g., \cite{Amm5})
\be\nonumber
\Pi^{(\omega)}(x,y)&=&\frac{1}{4\pi}\sum_{n=0}^{\infty}\frac{(n+1)(\la+2\mu)+\mu}{\mu (\la+2\mu)}
\frac{(i\omega)^n}{(n+2)n!} |x-y|^{n-1}\, \textbf{I}\\ \label{GO}
&&- \frac{1}{4\pi}\sum_{n=0}^{\infty}\frac{\la+\mu}{\mu (\la+2\mu)}\frac{(i\omega)^n (n-1)}{(n+2)n!} |x-y|^{n-3}\,(x-y)\otimes(x-y),
\en where $x\otimes x:=x^\top x\in \R^{N\times N}$ for $x=(x_1,\cdots, x_N)\in \R^N$.
Letting $x\rightarrow y$, we get
\be\no
 \Pi^{(\omega)}(x,y)&=&\frac{\la+3\mu}{8\pi \mu (\la+2\mu)} \frac{1}{|x-y|}\,\textbf{I}+i\omega\frac{2\la+5\mu}{12\pi \mu (\la+2\mu)}\,\textbf{I}\\ \label{Gomega3}
 &&+\frac{\la+\mu}{8\pi\mu(\la+2\mu)}\frac{1}{|x-y|^3}\,(x-y)\otimes(x-y)
 +o(1)\omega^2.
\en
Taking $\omega\rightarrow +0$ in (\ref{Gomega3}), we obtain
 the fundamental tensor of the Lam\'e system with $\omega=0$ in $\R^3$:
\be\label{Pi0}
\Pi^{(0)}(x,y)=\frac{\lambda+3\mu}{8\pi\mu (\lambda+2\mu)}\frac{1}{|x-y|}\,\textbf{I}+ \frac{\lambda+\mu}{8\pi\mu (\lambda+2\mu)}\frac{1}{|x-y|^3}\, (x-y)\otimes(x-y).
\en
Analogously, in two dimensions we have the expression (see \cite[Chapter 2.2]{Hsiao})
\be\label{G0}
 \Pi^{(0)}(x,y)=\frac{1}{4\pi}\left[-\frac{3\mu+\la}{\mu(2\mu+\la)}\ln |x-y|\,\textbf{I}
+\frac{\mu+\la}{\mu(2\mu+\la)\,|x|^2}\, (x-y)\otimes(x-y)\right].
\en
Similar to the definitions of $SL_Q$, $DL_Q$, $S_Q$, $D_Q$, we define the operators $SL^{(0)}_Q$, $DL^{(0)}_Q$, $S^{(0)}_Q$, $D^{(0)}_Q$ in the same way as (\ref{SL}), (\ref{DL}), (\ref{S}) and (\ref{K}), but with the tensor $\Pi^{(\omega)}(x,y)$ replaced by $\Pi^{(0)}(x,y)$. It is well known that these operators
 all have weakly singular kernels; see, e.g., \cite{Hsiao} and \cite{Kupradze}.

Using the asymptotic behavior of Bessel functions, it has been shown in two dimensions (see e.g.,\cite[Lemma 2.1]{HS})
\be\label{eq:14}
\Pi^{(\omega)}(x,y)=\Pi^{(0)}(x,y) +
\eta\,\textbf{I}\,+\mathcal{O}(|x-y|^2\ln |x-y|)
\en as $x\rightarrow y$, where $\eta$ is a constant given by
\ben
\eta=-\frac{1}{4\pi}\left[ \frac{\la+3\mu}{\mu(\la+2\mu)} (\ln \frac{\omega}{2}+E-\frac{i\pi}{2})+ \frac{\la+\mu}{\mu(\la+2\mu)}-\frac{1}{2}(\frac{\ln \mu}{\mu}+\frac{\ln (\la+2\mu)}{\la+2\mu})\right],
\enn with $E=0.57721\cdots$ being Euler's constant.
From the asymptotic behavior (\ref{eq:14}), it follows that
for $x\in \overline{D}_h$,
\be\label{eq:15}
\int_{D_h} || \Pi^{(\omega)}(x,y) ||_{\max}\,dy=\mathcal{O}(h^2\ln h),\;
\int_{D_h} || \partial_{x_j}\Pi^{(\omega)}(x,y) ||_{\max}\,dy=\mathcal{O}(h),
\en for $j=1,2,3$ as $h\rightarrow 0$,
where $||\cdot||_{\max}$ denotes the maximum norm of a matrix. Analogously, we may deduce from (\ref{Gomega3}) that in 3D,
\be\label{eq:16}
\int_{D_h} || \Pi^{(\omega)}(x,y) ||_{\max}\,dy=\mathcal{O}(h^2),\;
\int_{D_h} || \partial_{x_j}\Pi^{(\omega)}(x,y) ||_{\max}\,dy=\mathcal{O}(h),\; j=1,2,3,
\en
as $h\rightarrow0$. The relations in (\ref{eq:15}) and (\ref{eq:16}) remain valid for all $\omega\geq 0$. The difference $\Pi^{(\omega)}(x,y)-\Pi^{(0)}(x,y)$ is a continuous function in $\R^N\times\R^N$.

\subsubsection{Auxiliary lemmas with estimates on small inclusions}
\begin{lem}\label{Au-Le-1}
Assume that $-\omega^2  $ is not an eigenvalue of the elliptic operator $\mathcal{L}$ on $\Omega$ with the traction-free boundary condition. Let $u_0\in H^1(\Omega)^N$ be the unique solution of (\ref{Lame-0}) with $\psi\in H^{-1/2}(\partial \Omega)^N$. Consider the Lam\'e system
\be\label{eq:18} \left\{\begin{array}{lll}
\mathcal{L}w+\omega^2   w=0&&\mbox{in}\quad \Omega\backslash\overline{D}_{h},\\
 T w=T u_0&&\mbox{on}\quad \partial D_{h},\\
 Tw=0&&\mbox{on}\quad \partial \Omega.\end{array}\right.
\en Then there exists a constant $h_0\in\mathbb{R}_+$ such that for all $h<h_0$, the above Lam\'e system admits a unique solution $w\in H^1(\Omega\backslash\overline{D}_h)^N$. Moreover, there holds the estimate
\be\label{eq:estlem55}
||w||_{H^{1/2}(\partial \Omega)^N}\leq C\,h^N\,||\psi||_{H^{-1/2}(\partial \Omega)^N}
\en
where $C$ is a positive constant independent of $h$ and $\psi$.
\end{lem}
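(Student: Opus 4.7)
The plan is to reduce the boundary value problem (\ref{eq:18}) to an integral equation on $\partial D_h$ via a single-layer ansatz, solve it uniformly in $h$, and then read off the trace on $\partial\Omega$ using the fact that $u_0$ extends as a smooth solution of the Lam\'e system on all of $\Omega$ (in particular, inside $D_h$). The extra factor $h^N$ beyond the naive surface-measure scaling $h^{N-1}$ will be extracted from the mean-zero property of $Tu_0|_{\partial D_h}$ against rigid motions.

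First I would make the ansatz $w = SL_{D_h}\varphi + V_\varphi$ for an unknown density $\varphi\in H^{-1/2}(\partial D_h)^N$, where $V_\varphi\in H^1(\Omega)^N$ is the unique solution of $\mathcal{L}V_\varphi+\omega^2 V_\varphi=0$ in $\Omega$ with $TV_\varphi = -T(SL_{D_h}\varphi)$ on $\partial\Omega$ (well posed since $-\omega^2$ is not an eigenvalue). By construction $Tw=0$ on $\partial\Omega$, and the remaining condition $Tw=Tu_0$ on $\partial D_h$ becomes the integral equation
$$
\bigl(-\tfrac{1}{2}I + K^*_{D_h}\bigr)\varphi + TV_\varphi|_{\partial D_h} = Tu_0|_{\partial D_h}.
$$
After rescaling $y=h\xi$, the operator on the left converges as $h\to 0$ to the static operator $(-\tfrac{1}{2}I + K^{*,0}_D)$ on $\partial D$, whose kernel is exactly the space of rigid motions. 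A Fredholm argument combined with interior regularity for $V_\varphi$ shows the system is uniquely solvable for small $h$ with the crude a priori bound $\|\varphi\|_{H^{-1/2}(\partial D_h)^N}\le C\|Tu_0|_{\partial D_h}\|_{H^{-1/2}(\partial D_h)^N}$, which already yields the existence and uniqueness part of the lemma.

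For the sharp estimate I would evaluate $w|_{\partial\Omega}$ using the representation. Since $\mathrm{dist}(\partial\Omega,\partial D_h)$ is bounded below for small $h$, the kernel $\Pi(x,y)$ for $x\in\partial\Omega$, $y\in\partial D_h$ is smooth in $y$. Taylor-expanding $\Pi(x,y)$ around $y=0$ reduces $SL_{D_h}\varphi|_{\partial\Omega}$ to a linear combination of the low-order moments $\int_{\partial D_h}y^\alpha\varphi\,ds$ plus a remainder of size $h^2\|\varphi\|_{L^1(\partial D_h)}$. The crucial point is that these moments are of order $h^N$. This comes from testing the integral equation above against the cokernel of $(-\tfrac{1}{2}I + K^{*,0}_D)$, namely the rigid motions $R$, and using Betti's formula applied to $u_0$ on $D_h$: since $TR=0$ and $\mathcal{L}R=0$,
$$
\int_{\partial D_h} Tu_0\cdot R\,ds = -\omega^2\int_{D_h} u_0\cdot R\,dx = O(h^N)\,\|u_0\|_{L^\infty(D_h)^N}\,\|R\|_{L^\infty(D_h)^N},
$$
and interior elliptic regularity for $u_0$ yields $\|u_0\|_{L^\infty(D_h)^N}\le C\|\psi\|_{H^{-1/2}(\partial\Omega)^N}$. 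Propagating this $O(h^N)$ orthogonality through the Fredholm alternative produces moment bounds of the same order on $\varphi$, and hence the estimate (\ref{eq:estlem55}).

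The main obstacle will be the uniform-in-$h$ invertibility of the rescaled integral operator, whose $h\to 0$ limit has the nontrivial kernel of rigid motions. Establishing moment bounds on $\varphi$ of order $h^N$---rather than merely norm bounds---requires a careful quantitative version of the Fredholm alternative that precisely exploits the $O(h^N)$ orthogonality of $Tu_0$ to rigid motions, together with estimates on the perturbation $V_\varphi$ coming from interior regularity. The remaining bookkeeping (Taylor expansion of $\Pi$, surface-measure factors, and the logarithmic peculiarity in two dimensions noted in (\ref{eq:14})) is then essentially routine.
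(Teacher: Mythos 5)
Your approach (single-layer ansatz plus a moment extraction via rigid motions) is genuinely different from the paper's, which represents $w$ via Betti's formula in terms of the unknown Dirichlet traces $\phi_1=w|_{\partial D_h}$, $\phi_2=w|_{\partial\Omega}$ and a known function $V(x)=\int_{\partial D_h}\Pi(x,y)Tu_0(y)\,ds(y)$, and then solves the resulting coupled system of double-layer equations. The central engine is the same — Betti's formula converts surface integrals over $\partial D_h$ into volume integrals over $D_h$ because $u_0$ extends as a solution inside $D_h$ — but the paper applies it directly to $V$, obtaining $\|V\|_{C(\partial\Omega)}=O(h^N)$ and (crucially) $\|V(h\,\cdot\,)\|_{C(\partial D)}=O(h)$, and the latter feeds into the rescaled integral equation to give $\|\phi_1(h\,\cdot\,)\|_{L^2(\partial D)}=O(h)$; that extra $h$ combined with the surface-measure factor $h^{N-1}$ produces $h^N$ in Step~3. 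You only mention the single Betti computation that corresponds to the $O(h^N)$ estimate on $\partial\Omega$ and do not record the $O(h)$ local estimate, which is the other half of the mechanism.

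There is also a concrete error in your plan. You assert that the $h\to 0$ limit of your exterior-traction operator, $-\tfrac12 I+K^{*,0}_D$, has kernel equal to the rigid motions, and you build the moment extraction around pairing with that (co)kernel. In fact the exterior elastostatic Neumann operator $-\tfrac12 I+K^{*,0}_D$ is injective in 3D: if $\varphi$ is in its kernel, $SL_D^{(0)}\varphi$ solves the exterior Neumann problem with zero data and decays at infinity, hence vanishes outside $D$; continuity of the single layer then forces it to vanish on $\partial D$, the interior Dirichlet problem gives vanishing inside $D$, and the jump relation gives $\varphi=0$. Rigid motions are tied instead to the interior Neumann operator $\tfrac12 I+K^{*,0}_D$ and its cokernel. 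So the ``main obstacle'' you anticipate (a nontrivial rigid-motion kernel) does not exist, but neither does the cokernel pairing you plan to exploit, and your proposal never explains how the $O(h^N)$ orthogonality of $Tu_0$ to rigid motions would propagate through the inverse of a (trivially invertible) operator into a moment bound on $\varphi$. The paper sidesteps all of this: using the Dirichlet-trace/double-layer representation, the relevant boundary operator is a compact perturbation of $\tfrac12 I - K^{(0)}_{\partial D}$, which is an isomorphism with no rigid-motion kernel, and no moment bounds on a single-layer density are ever needed.
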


\begin{proof}
For clarity we divide our proof into three steps.

{\bf Step 1.} Show that the function
\ben
V(x):=\int_{\partial D_h} \Pi(x,y)\, Tu_0(y)\,ds(y),\quad x\in \Omega\backslash D_h
\enn satisfies the estimates
\be\label{eq:17}
||V(h\,\cdot\,)||_{C(\partial D)}\leq C\,h||\psi||_{H^{-1/2}(\partial \Omega)^N},\quad
||V||_{C(\partial\Omega)}\leq C\,h^N\,||\psi||_{H^{-1/2}(\partial \Omega)^N}.
\en
We first estimate $V(x)$ for $x\in \partial \Omega$. From Betti's formula, we may rewrite the $j$-th component $V_j$ of $V$ as
\be\label{H}
V_j(x)&=&\int_{\partial D_h} [\Pi^\top(x,y)\textbf{e}_j]\cdot Tu_0(y)\,ds(y)\\ \no
&=&\int_{ D_h} \mathcal{L} u_0(y)\cdot[\Pi^\top(x,y)\textbf{e}_j]dy+\int_{D_h} [\mathcal{C}^{(0)}:\nabla_y (\Pi^\top(x,y)\textbf{e}_j)]: \nabla_y u_0\,dy\\ \no
&=&-\omega^2\int_{ D_h}  u_0(y)\cdot[\Pi^\top(x,y)\textbf{e}_j]dy+\int_{D_h} [\mathcal{C}^{(0)}:\nabla_y (\Pi^\top(x,y)\textbf{e}_j)]: \nabla_y u_0\,dy.
\en
Since $||\Pi(x,y)||_{\max}$ and $||\partial_{y_j}\Pi(x,y)||_{\max}$ are uniformly bounded for all $x\in\partial \Omega$, $y\in \partial D_h$ and for all $j=1,2,3$, we readily derive from (\ref{H}) that
\ben
|V_j(x)|\leq C\,h^N\,\left(\omega^2 ||u_0||_{L^\infty(D_h)^N}+||\nabla u_0||_{L^\infty(D_h)^N}\right)\leq C\,h^N ||\psi||_{H^{-1/2}(\partial \Omega)^N},
\enn for all $j=1,2,3$,
where the last inequality follows from the stability of the boundary value problem (\ref{Lame-2}). This proves the second estimate in (\ref{eq:17}).
The first estimate when $ x\in \partial D_h$ follows straightforwardly from (\ref{H}), the relations in (\ref{eq:15}) and (\ref{eq:16}), together with the fact that both
$||u_0||_{L^\infty(D)^N}$ and $||\nabla u_0||_{L^\infty(D)^N}$ are bounded by $||\psi||_{H^{-1/2}(\partial \Omega)^N}$.

{\bf Step 2.} Set $\phi_1=w|_{\partial D_h}$, $\phi_2=w|_{\partial \Omega}$. In this step, we shall verify
\be\label{eq:19}
||\phi_1||_{L^2(\partial D_h)^N}\leq C\,h^{(N+1)/2}\,||\psi||_{H^{-1/2}(\partial \Omega)^N},\quad
||\phi_2||_{L^2(\partial \Omega)^N}\leq C\,h^{N}\,||\psi||_{H^{-1/2}(\partial \Omega)^N}.
\en
 Again using Betti's formula, we represent the solution $w$ to (\ref{eq:18}) as
\ben
w(x)&=&\int_{\partial\Omega\cup \partial D_h} \left\{\Pi(x,y) T w(y)-\Xi(x,y) w(y)\right\}\,ds(y)\\ \no
&=&-\int_{\partial\Omega}\Xi(x,y) w(y)\,ds(y)-\int_{ \partial D_h} \left\{\Pi(x,y) T u_0(y)-\Xi(x,y) w(y)\right\}\,ds(y)\\
&=&- DL_{\partial \Omega} (\phi_2)(x)+DL_{\partial D_h} (\phi_1)(x)-V(x),
\enn where the function $H$ is defined in Step 1.
Since $Tu_0$ is smooth on $\partial D_h$ and the boundaries of $\Omega$ and $D_h$ are both smooth, by the elliptic regularity $w$ is smooth up to the boundary of $\Omega\backslash D_h$. Hence $\phi_1$ and $\phi_2$ are both smooth functions.
Letting $x$ tend to $\partial \Omega$ and $\partial D_h$, and applying the jump relations for double layer potentials, we have for $\phi_1\in C(\partial D_h)$, $\phi_2\in C(\partial \Omega)$ that
\be\label{eq:20}\left\{\begin{array}{lll}
\frac{1}{2}\phi_1(x)&=&\left(K_{\partial D_h}\phi_1\right)(x)-\left(DL_{\partial \Omega}\phi_2  \right)(x)+V(x),\quad x\in\partial D_h,\\
\frac{1}{2}\phi_2(x)&=&\left(DL_{\partial D_h}\phi_1\right)(x)-\left(K_{\partial \Omega}\phi_2  \right)(x)+V(x),\quad x\in\partial \Omega.
\end{array}\right.
\en
Since $-\omega^2$ is not an eigenvalue of the operator $\mathcal{L}$ on $\Omega$ with the traction-free boundary condition, the operator $\frac{1}{2}I+K_{\partial \Omega}: C(\partial \Omega)\rightarrow C(\partial \Omega)$ is continuously invertible. Thus it follows from (\ref{eq:20}) and (\ref{eq:17})  that
\be\no
||\phi_2||_{C(\partial \Omega)}&\leq& C\, \left(||DL_{\partial D_h}\phi_1||_{C(\partial \Omega)} + ||V  ||_{C(\partial \Omega)}\right)
\\ \label{eq:22}
&\leq& C\,\left(h^{(N-1)/2}\,||\phi_1||_{L^2(\partial D_h)^N}+ h^N\,||\psi||_{H^{-1/2}(\partial \Omega)^N}\right).
\en
Since the $L^2$-norm of $\phi_2$ can also be bounded by the left hand side of (\ref{eq:22}), we only need to verify the first estimate in (\ref{eq:19}). To that end, we rewrite the first equation in (\ref{eq:20}) as
\be\label{eq:23}
\left[\left(\frac{1}{2} I+K_{\partial D}^{(0)}-\mathcal{R}\right) \phi_1(h\,\cdot\,)\right](h x)+\left( DL_{\partial \Omega}\phi_2(\cdot)\right)(hx)=V(hx),\quad x\in D,
\en where the kernel of the operator $\mathcal{R}: L^2(\partial D)^N\rightarrow L^2(\partial D)^N$ is given by the continuous matrix $\Pi^{(0)}-\Pi^{(\omega)}$. Further, it can be straightforwardly checked that
\ben
||\mathcal{R}||_{L^2(\partial D)^N\rightarrow L^2(\partial D)^N}\leq C\,h^{N-1}.
\enn On the other hand, the $L^2$-norm of  $DL_{\partial \Omega}\phi_2(h\,\cdot\,)$ over $\partial D$ can be bounded by the left hand side of (\ref{eq:22}) and that of $V(h\,\cdot\,))$ can be estimated as in the first relation of (\ref{eq:17}).
Hence, by the boundedness of $(\frac{1}{2}I-K^{(0)}_{\partial D})^{-1}: L^2(\partial D)^N\rightarrow L^2(\partial D)^N$, we deduce from (\ref{eq:23}) that
\ben
 ||\phi_1(h\,\cdot\,)||_{L^2(\partial D)^N}\leq C\,h\,||\psi||_{H^{-1/2}(\partial \Omega)^N},
\enn leading to the first relation of (\ref{eq:19}) on $\partial D_h$.

{\bf Step 3.}  By the second equality in (\ref{eq:20}) and the definition of $\phi_2$ in Step 2, one has
\be\label{eq:28}
w(x)=2\left[\left(DL_{\partial D_h}\phi_1\right)(x)-\left(K_{\partial \Omega}\phi_2\right)(x) +V(x)\right],\quad x\in\partial\Omega.
\en
By a similar argument to that for the proof of the second relation in (\ref{eq:17}), one can show that
\be \label{eq:27}
||V||_{H^{1/2}(\partial \Omega)^N}\leq C\, ||V||_{C^1(\partial \Omega)^N}\leq C\,h^N\,||\psi||_{H^{-1/2}(\partial \Omega)^N},
\en Further, using the first estimate in (\ref{eq:19}) yields
\be\label{eq:25}
||DL_{\partial D_h}\varphi_1||_{H^{1/2}(\partial \Omega)^N}\leq C\, ||DL_{\partial D_h}\varphi_1||_{C^1(\partial \Omega)^N}\leq C\,h^N\,||\psi||_{H^{-1/2}(\partial \Omega)^N}.
\en  Since $K_{\partial \Omega}: L^2(\Omega)^N\rightarrow H^1(\Omega)^N$ is bounded, by the second estimate of (\ref{eq:19}) we find
\be\label{eq:26}
||K_{\partial \Omega}\phi_2||_{H^{1/2}(\partial \Omega)^N}\leq C\, h^N\,||\psi||_{H^{-1/2}(\partial \Omega)^N}.
\en
 Combining (\ref{eq:28})-(\ref{eq:26}) yields \eqref{eq:estlem55}.

 The proof is completed.
\end{proof}

Consider the time-harmonic elastic scattering problem from a small cavity $D_h\subset \Omega$. This can be modelled by the following boundary value problem in the exterior of $D_h$: find $W\in H^1_{loc}(\R^N\backslash\overline{D}_h)^N$ such that
 \be\label{Lame-cavity}
\mathcal{L}\, W+\omega^2   W=0\quad\mbox{in}\quad \R^N\backslash\overline{D}_{h},\quad
 T\, W=\varphi\quad\mbox{on}\quad \partial D_{h}.
\en Since $\R^N\backslash\overline{D}_{h}$ is unbounded,
 $W$ is  required to satisfy the Kupradze radiation condition when $|x|\rightarrow \infty$
(see, e.g. \cite{AK}):
\be\label{Rads}
\lim_{r\rightarrow\infty}\left(\frac{\partial W_p}{\partial r}-ik_p W_p\right)=0,\quad \lim_{r\rightarrow\infty}\left(\frac{\partial W_s}{\partial r}-ik_s W_s\right)=0,\quad r=|x|,
\en which holds uniformly in all directions $\hat{x}=x/|x|\in \mathbb{S}^{N-1}$. The functions $W_p$ and $W_s$ denote  the compressional and shear parts of $W$, respectively; see (\ref{kp}) and (\ref{ks}). It is well known that the above boundary value problem is well-posed for any $\varphi\in H^{-1/2}(\partial D_h)^N$.
\begin{lem}\label{Au-Le-2}
Let $W\in H^1_{loc}(\R^N\backslash\overline{D}_h)^N$ be the unique solution of the system (\ref{Lame-cavity})-(\ref{Rads}). Then, there exists $h_0\in\mathbb{R}_+$ such that when $h<h_0$,
\be\label{eq:35}
||W||_{H^{1/2}(\partial \Omega)^N}+||T W|_{\partial \Omega}||_{C(\partial \Omega)}\leq C\,h^{N-1}\,||\varphi(h\,\cdot\,)||_{H^{-3/2}(\partial D)^N},
\en where $C$ is a positive constant independent of $h$ and $\varphi$.
\end{lem}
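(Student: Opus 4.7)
The plan is to represent $W$ by a boundary layer potential over $\partial D_h$, rescale to the unit boundary $\partial D$ so that the $h$-dependence becomes explicit, and then exploit the fact that for $x\in\partial\Omega$ the fundamental tensor $\Pi^{(\omega)}(x,\cdot)$ is $C^\infty$ on the tiny source region $\partial D_h$ with all bounds uniform in $h$. Concretely, I would look for $W(x)=SL^{(\omega)}_{D_h}\psi(x)$ for $x\in\R^N\setminus\overline{D}_h$; the jump relation for the traction of the single layer together with $TW=\varphi$ on $\partial D_h$ produces the boundary integral equation $(-\tfrac12 I+K^{*}_{D_h})\psi=\varphi$ on $\partial D_h$. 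Should the chosen $\omega$ happen to coincide with a spurious interior eigenvalue of this operator, I would replace the single layer by a Brakhage--Werner-type combined representation; since $\omega$ is fixed this is only a notational modification.

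Setting $\xi=y/h$, $\hat\psi(\xi):=\psi(h\xi)$, $\hat\varphi(\xi):=\varphi(h\xi)$ and using the near-field expansions (\ref{Gomega3})--(\ref{eq:14}) of $\Pi^{(\omega)}$, the kernel of $K^{*}_{D_h}$ is homogeneous of degree $-(N-1)$ in $|x-y|$ at leading order, so together with the measure scaling $ds(y)=h^{N-1}ds(\xi)$ the integral operator is exactly scale-invariant in the static limit, up to a smoothing remainder of size $O(h)$ in 3D and $O(h\ln h)$ in 2D. The rescaled equation therefore takes the form $(-\tfrac12 I+\widehat{K}^{*}_{D,h})\hat\psi=\hat\varphi$ on $\partial D$. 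At the static limit $h=0$, $-\tfrac12 I+\widehat{K}^{*}_{D,0}$ is a compact perturbation of $-\tfrac12 I$ on $L^2(\partial D)^N$ with trivial kernel (by uniqueness of the static exterior Neumann problem for the Lam\'e system together with the Kupradze radiation condition), hence boundedly invertible; a duality/Fredholm argument then lifts this to uniform bounded invertibility of $-\tfrac12 I+\widehat{K}^{*}_{D,h}$ from $H^{-3/2}(\partial D)^N$ into itself for all $h\in(0,h_0]$. This yields the key estimate $\|\hat\psi\|_{H^{-3/2}(\partial D)^N}\leq C\,\|\hat\varphi\|_{H^{-3/2}(\partial D)^N}$ with $C$ independent of $h$.

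Returning to physical coordinates gives $W(x)=h^{N-1}\int_{\partial D}\Pi^{(\omega)}(x,h\xi)\hat\psi(\xi)\,ds(\xi)$. For $x\in\partial\Omega$ and $h<h_0$ small enough, $\mathrm{dist}(\partial\Omega,\overline{D}_h)\geq c_0>0$, so $\xi\mapsto\Pi^{(\omega)}(x,h\xi)$ and $\xi\mapsto T_x\Pi^{(\omega)}(x,h\xi)$ lie in $C^\infty(\partial D)^{N\times N}$ with $H^{3/2}(\partial D)^N$ norms bounded uniformly in $x\in\partial\Omega$ and $h\in(0,h_0]$. Pairing these smooth kernels with the bound on $\hat\psi$ from the previous step gives $|W(x)|+|TW(x)|\leq C h^{N-1}\|\hat\varphi\|_{H^{-3/2}(\partial D)^N}$ pointwise on $\partial\Omega$, and smoothness of the kernel in $x$ upgrades this to the $H^{1/2}(\partial\Omega)^N$-bound on $W$ and the $C(\partial\Omega)^N$-bound on $TW$ asserted in (\ref{eq:35}).

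The main obstacle is the uniform invertibility claim at the low-regularity level $H^{-3/2}(\partial D)^N$. At the $L^2$ or $H^{-1/2}$ level, the Fredholm alternative combined with the injectivity coming from exterior Navier uniqueness is classical, but pushing bounded inverse control down to $H^{-3/2}$ requires either a duality lift based on the $H^{3/2}$-mapping properties of the dual double-layer operator, or a careful quantitative estimate of the deviation of the rescaled operator from its static counterpart. The two-dimensional case is especially delicate because of the logarithmic term in $\Pi^{(0)}$ (cf.\ (\ref{G0}) and (\ref{eq:14})), which contributes an $h\ln h$ factor that must be absorbed into the choice of $h_0$.
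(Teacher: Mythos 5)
Your overall plan---rescale $\partial D_h$ to $\partial D$, split the rescaled boundary operator into a static leading part plus an $h$-small remainder, then exploit smoothness of $\Pi^{(\omega)}(x,\cdot)$ for $x$ on $\partial\Omega$ far from $D_h$---does match the skeleton of the paper's proof. But your choice of layer-potential ansatz is different from the paper's, and that difference is exactly what generates the obstacle you flag at the end and do not resolve. You represent $W=SL^{(\omega)}_{D_h}\psi$ with an artificial density $\psi$, so $TW=\varphi$ becomes $(-\tfrac12 I+K^*_{D_h})\psi=\varphi$, and because $\varphi(h\cdot)$ must be measured in $H^{-3/2}(\partial D)^N$ to extract the $h^{N-1}$ gain, you are forced to invert the Neumann--Poincar\'e adjoint with a bound uniform in $h$ at the nonstandard level $H^{-3/2}(\partial D)^N$. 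You correctly identify this as the ``main obstacle'' and sketch only that it ``requires either a duality lift\ldots or a careful quantitative estimate''; that is a genuine gap, not a notational issue, and it is not clear a Fredholm/compactness argument alone closes it at $H^{-3/2}$ (one must also handle the rigid-motion cokernel of the static adjoint double layer for the Lam\'e operator, and in 2D the logarithmic growth of the static single layer).

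The paper avoids this obstacle by a different boundary-integral reduction. Instead of an ansatz with an artificial density, it uses Betti's (Green's) representation $W=DL_{\partial D_h}\phi - SL_{\partial D_h}\varphi$ with the \emph{physical} Dirichlet trace $\phi=W|_{\partial D_h}$. The jump relations then give $\tfrac12\phi=K_{\partial D_h}\phi-S_{\partial D_h}\varphi$, which after rescaling is $(\tfrac12 I-K^{(0)}_{\partial D}-\mathcal R)\phi(h\cdot)=-(S_{\partial D_h}\varphi)(h\cdot)$. The key point is that $\varphi$ enters only through $S_{\partial D_h}$, which is smoothing: after rescaling one shows $\|(S_{\partial D_h}\varphi)(h\cdot)\|_{H^{-1/2}(\partial D)^N}\leq C\,\tilde e(h)\,\|\varphi(h\cdot)\|_{H^{-3/2}(\partial D)^N}$. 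Therefore the operator that must be inverted uniformly in $h$ is $\tfrac12 I-K^{(0)}_{\partial D}-\mathcal R$ acting on $H^{-1/2}(\partial D)^N$, where invertibility of $\tfrac12 I - K^{(0)}_{\partial D}$ and the bound $\|\mathcal R\|\leq Ch^{N-1}$ are classical. The $H^{-3/2}$ norm only appears on data on the right-hand side, never in a space where an inverse must act. If you want to repair your proposal along your own lines, you would need to either (a) establish the $H^{-3/2}\to H^{-3/2}$ uniform invertibility of $-\tfrac12 I + \widehat K^*_{D,h}$ rigorously (including the static nullspace issue and, in 2D, the $\ln h$ renormalization), or (b) switch to the Betti representation as the paper does, which is the cleaner route.
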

\begin{proof}
From Betti's formula, we have the expression
\be\label{eq:34}
W(x)=(DL_{\partial D_h}\phi)(x)-(SL_{\partial D_h}\varphi)(x),\quad x\in \R^N\backslash\overline{D}_h,
\en with $\phi=W|_{\partial D_h}\in H^{1/2}(\partial D_h)^N$. Letting $x\rightarrow\partial D_h$ and applying the jump relations of layer potential operators, we obtain
\be\label{eq:integral1}
\frac{1}{2} \phi(x)=(K_{\partial D_h}\phi  )(x)-(S_{\partial D_h}\varphi  )(x),\quad x\in\partial D_h.
\en
Similar to (\ref{eq:23}), \eqref{eq:integral1} can be equivalently formulated as
\be\label{eq:29}
\left[\left(\frac{1}{2} I-K_{\partial D}^{(0)}-\mathcal{R}\right) \phi(h\,\cdot\,)\right](h x)
=-(S_{\partial D_h}\varphi)(hx),\quad x\in\partial D.
\en
where the kernel of the operator $\mathcal{R}: H^{-1/2}(\partial D)^N\rightarrow H^{-1/2}(\partial D)^N$ is given by the continuous matrix $\Pi^{(0)}-\Pi^{(\omega)}$, satisfying the estimate (see \cite[Chapter 4.3]{Nedelec})
\be\label{eq:37}
||\mathcal{R}||_{H^{-1/2}(\partial D)^N\rightarrow H^{-1/2}(\partial D)^N}\leq C\,h^{N-1}.
\en Since $\frac{1}{2}I-K_{\partial D}^{(0)}$ is an isomorphism from $H^{-1/2}(\partial D)^N$ to $H^{-1/2}(\partial D)^N$,
it follows from (\ref{eq:29}) and (\ref{eq:37}) that for $h\in\mathbb{R}_+$ sufficiently small
\be\label{eq:30}
||\phi(h\,\cdot\,)||_{H^{-1/2}(\partial D)^N}\leq C\, || (S_{\partial D_h}\varphi) (h\,\cdot\,) ||_{H^{-1/2}(\partial D)^N}.
\en
In order to estimate the left hand side of (\ref{eq:30}),
we decompose  $(S_{\partial D_h}\varphi) (hx)$ into
\be\no
(S_{\partial D_h}\varphi) (hx)&=&h^{N-1}(S_{\partial D}\varphi(h\,\cdot\,)) (hx)\\ \label{eq:31}
&=&h^{N-1}\,(S^{(0)}_{\partial D}\varphi(h\,\cdot\,)) (hx)+(\mathcal{G}_{\partial D}\varphi(h\,\cdot\,)) (hx),
\en where the integral kernel of the integral operator $\mathcal{G}_{\partial D}$ is given by
\ben
\Pi'(x,y)=h^{N-1}\Pi^{(\omega)}(hx,hy)-h^{N-1}\Pi^{(0)}(x,y).
\enn
Using (\ref{Gomega3}) and (\ref{Pi0}), together with
straightforward calculations, one can show that when $N=3$
\ben
\Pi'(x,y)&=&h^2\frac{1}{4\pi}\sum_{n\geq 2}^{\infty}\frac{(n+1)(\la+2\mu)+\mu}{\mu (\la+2\mu)}
\frac{(i\omega)^n}{(n+2)n!} |hx-hy|^{n-1}\, \textbf{I}\\
&&- h^2\frac{1}{4\pi}\sum_{n\geq 2}^{\infty}\frac{\la+\mu}{\mu (\la+2\mu)}\frac{(i\omega)^n (n-1)}{(n+2)n!} |hx-hy|^{n-3}\,(hx-hy)\otimes(hx-hy)\\
&=&h^2\,i\omega\frac{2\la+5\mu}{12\pi \mu (\la+2\mu)}\,\textbf{I}+h^2\omega^2\,A(h|x-y|),
\enn where $A$ is a real-analytic function satisfying $A(t)\rightarrow 0$ as $t\rightarrow 0$.
In two dimensions, it follows from (\ref{G0}) and (\ref{eq:14})  that
\ben
\Pi'(x,y)&=& h[\Pi^{(\omega)}(hx,hy)-\Pi^{(0)}(x,y) ]\\
&=& h\left[\Pi^{(0)}(hx,hy)-\Pi^{(0)}(x,y) +
\eta\,\textbf{I}\,+h^2\ln h\mathcal{O}(|x-y|^2\ln |x-y|)  \right]\\
&=& -\frac{3\mu+\la}{4\pi\mu(2\mu+\la)} h\ln h+ \eta\,\textbf{I} h+  h^3\ln h\mathcal{O}(|x-y|^2\ln |x-y|)\\
&=& \mathcal{O}(h\ln h)
\enn
as $h\rightarrow +0$.
Hence, by the mapping properties presented in \cite[Chapter 4.3]{Nedelec},
\be\label{eq:32}
||\left(\mathcal{G}_{\partial D}\varphi(hx)\right)(h\,\cdot\,) ||_{H^{-1/2}(\partial D)^N}\leq C\,e(h)\,||\varphi(h\,\cdot\,) ||_{H^{-3/2}(\partial D)^N},
\en with the dimensional constant
\ben
e(h):=\left\{ \begin{array}{lll}
h^2 &&\mbox{if}\quad N=3,\\
h\,\ln h &&\mbox{if}\quad N=2.
\end{array}\right.
\enn
Recalling the boundedness of the operator $S^{(0)}_{\partial D}: H^{-3/2}(\partial D)^N\rightarrow H^{-1/2}(\partial D)^N$, we see from (\ref{eq:31}) the estimate
\ben
||\left(S_{\partial D_h}\varphi\right)(h\,\cdot\,) ||_{H^{-1/2}(\partial D)^N}\leq
C\, \tilde{e}(h)\,||\varphi(h\,\cdot\,) ||_{H^{-3/2}(\partial D)^N}
\enn with
\ben
\tilde{e}(h):=\left\{ \begin{array}{lll}
h &&\mbox{if}\quad N=3,\\
h\,\ln h &&\mbox{if}\quad N=2.
\end{array}\right.
\enn
 Hence, by (\ref{eq:30}),
\be\label{eq:33}
||\phi(h\,\cdot\,)||_{H^{-1/2}(\partial D)^N}\leq C\, \tilde{e}(h)\,||\varphi(h\,\cdot\,) ||_{H^{-3/2}(\partial D)^N}.
\en
Let $\Omega_1$ be a compact set of $\R^N\backslash\overline{D}$ containing $\partial \Omega$. For $x\in \Omega_1$, we see from (\ref{eq:34}) and (\ref{eq:33}) that
\be\no
||W||_{L^2(\Omega_1)^N}&\leq& C\,h^{N-1}\{||\phi(h\,\cdot\,)||_{H^{-1/2}(\partial D)^N}+  ||\varphi(h\,\cdot\,) ||_{H^{-3/2}(\partial D)^N}\}\\ \label{eq:36}
&\leq&C\, h^{N-1}\,||\varphi(h\,\cdot\,) ||_{H^{-3/2}(\partial D)^N}.
\en
Finally, the estimate in (\ref{eq:35}) is a consequence of (\ref{eq:36}) and the interior estimate for elliptic boundary value problems.

The proof is complete.
\end{proof}

\subsection{Finite realization of the traction-free lining}
Finally, we present an interesting observation on the physical nature of the proposed lossy layer $\{D\backslash\overline{D}_{1/2}; \mathcal{C}^{(2)},$ $\rho^{(2)}\}=(F_h)_{*}\{ D_h\backslash\overline{D}_{h/2}; \tilde{\mathcal{C}}^{(2)}, \tilde{\rho}^{(2)} \}$ in our near-cloaking construction \eqref{GeneralOmega}-\eqref{construction}. It can be shown to be a finite realization of the traction-free lining. Indeed, we have

\begin{lem}
Suppose that $-\omega^2$ is not an eigenvalue of the elliptic operator $\mathcal{L}$ on $\Omega\backslash\overline{D}$ with the traction-free boundary condition. Let $U \in H^1(\Omega\backslash\overline{D})^3$ be the unique solution of
\be\label{Lame-3}\left\{\begin{array}{lll}
\nabla\cdot (\mathcal{C}^{(1)}: \nabla U )+\omega^2 \rho^{(1)}\, U =0&&\mbox{in}\quad \Omega\backslash\overline{D},\\ \mathcal{N}_{\mathcal{C}^{(1)}}\, U=\psi&&\mbox{on}\quad\partial \Omega,\\
\mathcal{N}_{\mathcal{C}^{(1)}}\, U=0&&\mbox{on}\quad\partial D,
\end{array}\right.
\en
where $\psi\in H^{-1/2}(\partial \Omega)^N$ and $\{\Omega\backslash\overline{D}; \mathcal{C}^{(1)},\rho^{(1)}\}$ is the elastic medium in (\ref{device-1}). Let $u \in H^1(\Omega)^N$ be the solution to the boundary value problem
\be\label{Lame-4}
\nabla\cdot (\mathcal{C}: \nabla u )+\omega^2 \rho\, u =0\quad\mbox{in}\quad \Omega,\quad
\mathcal{N}_{\mathcal{C}}\, u=\psi\quad\mbox{on}\quad\partial \Omega,
\en
where $(\Omega; \mathcal{C}, \rho)$ is given in (\ref{GeneralOmega})-\eqref{construction}.
Then for sufficiently small $h\in\mathbb{R}_+$, we have
\ben
||U -u ||_{H^{1/2}(\partial \Omega)^N}\leq C\, h^N ||\psi||_{H^{-1/2}(\partial \Omega)^N},
\enn
where $C$ is a positive constant independent of $h$ and $\psi$.
\end{lem}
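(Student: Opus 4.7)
The plan is to compare both $U$ and $u$ with the free-space reference $u_{0}$ (the solution of \eqref{Lame-0}) and obtain an $O(h^{N})$ bound on each discrepancy, thereby controlling $\|U-u\|_{H^{1/2}(\partial\Omega)^{N}}$ by the triangle inequality. The estimate $\|u-u_{0}\|_{H^{1/2}(\partial\Omega)^{N}}\leq Ch^{N}\|\psi\|_{H^{-1/2}(\partial\Omega)^{N}}$ is precisely the content of Theorem~\ref{Theorem} (equivalently Theorem~\ref{Theorem-1}), so it remains only to prove the analogous bound for $U$ versus $u_{0}$; this is where the traction-free-lining interpretation will enter.

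First I would pull back the problem for $U$ via $F_{h}$. Because $\{\Omega\backslash\overline{D};\mathcal{C}^{(1)},\rho^{(1)}\}=(F_{h})_{*}\{\Omega\backslash\overline{D}_{h};\mathcal{C}^{(0)},1\}$ by construction \eqref{device-1}, Lemma~\ref{Lem:change} implies that $\tilde{U}:=U\circ F_{h}\in H^{1}(\Omega\backslash\overline{D}_{h})^{N}$ satisfies $\mathcal{L}\tilde{U}+\omega^{2}\tilde{U}=0$ in $\Omega\backslash\overline{D}_{h}$. Since $F_{h}|_{\partial\Omega}=\text{Identity}$ by construction, the trace and the traction on $\partial\Omega$ are preserved: $\tilde{U}|_{\partial\Omega}=U|_{\partial\Omega}$ and $T\tilde{U}=\psi$ on $\partial\Omega$. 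The slightly subtle point is the transfer of the traction-free condition across $\partial D$: writing the weak formulation for $U$ with arbitrary test function $v\in H^{1}(\Omega\backslash\overline{D})^{N}$ vanishing on $\partial\Omega$, the boundary integral on $\partial D$ drops out because $\mathcal{N}_{\mathcal{C}^{(1)}}U=0$; performing the change of variables $x=F_{h}^{-1}(\tilde{x})$ exactly as in the proof of Lemma~\ref{Lem:change} yields the weak form for $\tilde{U}$ with $\mathcal{C}^{(0)}$ and with no boundary contribution on $\partial D_{h}$, so $T\tilde{U}=0$ on $\partial D_{h}$ in the natural sense. This is the main technical step to verify carefully, but it is essentially a direct calculation given \eqref{eq:1}.

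Setting $w:=u_{0}-\tilde{U}$ on $\Omega\backslash\overline{D}_{h}$, one checks that $w$ satisfies $\mathcal{L}w+\omega^{2}w=0$ in $\Omega\backslash\overline{D}_{h}$, $Tw=Tu_{0}$ on $\partial D_{h}$, and $Tw=0$ on $\partial\Omega$. This is precisely the boundary value problem \eqref{eq:18} treated in Lemma~\ref{Au-Le-1}, which delivers
\begin{equation*}
\|u_{0}-\tilde{U}\|_{H^{1/2}(\partial\Omega)^{N}}\leq Ch^{N}\|\psi\|_{H^{-1/2}(\partial\Omega)^{N}}.
\end{equation*}
Because $F_{h}|_{\partial\Omega}=\text{Identity}$, the trace $\tilde{U}|_{\partial\Omega}$ coincides with $U|_{\partial\Omega}$, so the same estimate holds with $\tilde{U}$ replaced by $U$. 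Combining this with the bound from Theorem~\ref{Theorem} via the triangle inequality
\begin{equation*}
\|U-u\|_{H^{1/2}(\partial\Omega)^{N}}\leq\|U-u_{0}\|_{H^{1/2}(\partial\Omega)^{N}}+\|u_{0}-u\|_{H^{1/2}(\partial\Omega)^{N}}\leq Ch^{N}\|\psi\|_{H^{-1/2}(\partial\Omega)^{N}}
\end{equation*}
completes the proof. The only nontrivial obstacle is the rigorous transport of the natural (traction-free) boundary condition under the push-forward on the interior boundary $\partial D\mapsto\partial D_{h}$; all remaining steps are direct applications of already-established results.
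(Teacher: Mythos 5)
Your proposal is correct and follows essentially the same argument as the paper: pull $U$ and $u$ back to the virtual configuration (where you correctly note the paper's push-forward $F_*U$ should be read as $U\circ F_h$), observe that the pulled-back $\tilde U$ solves the free Lam\'e equation in $\Omega\backslash\overline{D}_h$ with traction-free data on $\partial D_h$ and $\psi$ on $\partial\Omega$, set $W=u_0-\tilde U$ and invoke Lemma~\ref{Au-Le-1}, combine with Theorem~\ref{Theorem-1} for $\tilde u-u_0$, and finish with the triangle inequality using $F_h|_{\partial\Omega}=\mathrm{Id}$. Your extra remark on verifying the transfer of the natural boundary condition across $\partial D\mapsto\partial D_h$ via the weak form is a sensible elaboration of a step the paper takes for granted.
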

\begin{proof} Set $\tilde{U}=F_* U, \tilde{u}=F_* u$ in $\Omega$. By Lemma \ref{Lem:change}, we see $\tilde{u}$ satisfies (\ref{Lame-2}) and $\tilde{U}$ is the solution of boundary value problem
\ben
\mathcal{L} \tilde{U} +\omega^2 \, \tilde{U}=0\quad\mbox{in}\quad \Omega\backslash\overline{D}_h,\quad
 T \tilde{U} =\psi\quad\mbox{on}\quad\partial \Omega,\quad
T \tilde{U}=0\quad\mbox{on}\quad\partial D_h.
\enn Moreover, we have $U=\tilde{U}$, $u=\tilde{u}$ on $\partial \Omega$.
Let $u_0$ be the solution to the free-space boundary value problem (\ref{Lame-0}). Then the difference $W:=u_0-\tilde{U}$ satisfies
\ben
\mathcal{L} W +\omega^2 \, W =0\quad\mbox{in}\quad \Omega\backslash\overline{D}_h,\quad
 T W =0\quad\mbox{on}\quad\partial \Omega,\quad
T W =T u_0\quad\mbox{on}\quad\partial D_h.
\enn From Lemma \ref{Au-Le-1} , we see
\be\label{eq:ef1}
||\tilde{U}-u_0||_{H^{1/2}(\partial \Omega)^N}=||W||_{H^{1/2}(\partial \Omega)^N}\leq C\,h^N\,||\psi||_{H^{-1/2}(\partial \Omega)^N}.
\en
On the other hand, it follows from Theorem \ref{Theorem-1} that
\be\label{eq:ef2}
||\tilde{u}-u_0||_{H^{1/2}(\partial \Omega)^N}\leq C\,h^N\,||\psi||_{H^{-1/2}(\partial \Omega)^N}.
\en
Hence, combining the two estimates in \eqref{eq:ef1} and \eqref{eq:ef2}, we finally have
\ben
&&||U-u||_{H^{1/2}(\partial \Omega)^N}\\
&=&||\tilde{U}-\tilde{u}||_{H^{1/2}(\partial \Omega)^N}\\
&\leq& ||\tilde{U}-u_0||_{H^{1/2}(\partial \Omega)^N}+||\tilde{u}-u_0||_{H^{1/2}(\partial \Omega)^N}\\
&\leq& C\,h^N\,||\psi||_{H^{-1/2}(\partial \Omega)^N}.
\enn

The proof is complete.
\end{proof}

\end{document}